\theoremstyle{plain}
\newtheorem{theo}{Theorem}
\newtheorem{prop}[theo]{Proposition}
\newtheorem{lemm}[theo]{Lemma}
\newtheorem{coro}[theo]{Corollary}
\theoremstyle{definition}
\newtheorem{rema}[theo]{Remark}
\title{A structural description of extended ${\mathbb Z}_{2n}$-Schottky groups}
\author{Rub\'{e}n A. Hidalgo}
\address{Departamento de Matem\'atica y Estad\'{\i}stica, Universidad de La Frontera. Casilla 54-D, 4780000 Temuco, Chile}
\email{ruben.hidalgo@ufrontera.cl}
\date{}
\thanks{Partially supported by Projects Fondecyt 1190001 and 1220261}
\subjclass[2000]{Primary 30F10, 30F40}
\keywords{Schottky groups, Riemann surfaces, Automorphisms, Handlebodies}
\begin{document}

\begin{abstract}
Real points of Schottky space ${\mathcal S}_{g}$ are in correspondence with extended Kleinian groups $K$ containing, as a normal subgroup, a Schottky group $\Gamma$ of rank $g$ such that $K/\Gamma \cong {\mathbb Z}_{2n}$ for a suitable integer $n \geq 1$. These kind of groups are called extended ${\mathbb Z}_{2n}$-Schottky groups of rank $g$. 
 In this paper, we provide a structural decomposition theorem, in terms of Klein-Maskit's combination theorems, of these kind of groups.
\end{abstract}

\maketitle

\section{Introduction}
 A Schottky group of rank $g$ is a purely loxodromic Kleinian group, isomorphic to the free group $F_{g}$ of rank $g$, and with a non-empty region of discontinuity. In Section \ref{Sec:Schottkygroups} we recall the geometric decription of these groups. Such a geometrical description asserts that Schottky groups of rank $g \geq 1$ are a free product, in the sense of Klein-Maskit's combination theorem \cite{Maskit:Comb, Maskit:Comb4}, of $g$ cyclic groups generated by loxodromic elements. It also asserts that any two Schottky groups of the same rank are quasiconformaly conjugated.
 
 An extended Kleinian group $K$ is called an extended ${\mathbb Z}_{2n}$-Schottky group of rank $g$, where $n \geq 1$, it it contains a Schottky group $\Gamma$ of rank $g$ as a normal subgroup with $K/\Gamma \cong {\mathbb Z}_{2n}$. In the case $n=1$, these groups are also called extended Schottky groups of rank $g$.
In this paper, we provide 
a structural decomposition, in terms of Klein-Maskit's combination theorems, for the extended ${\mathbb Z}_{2n}$-Schottky groups, (see Theorem \ref{clasifica1}). For $n=1$, such a structural picture was provided in \cite{GH}.

\medskip

Extended ${\mathbb Z}_{2n}$-Schottky groups are related to the real points of Schottky space.
 If $g \geq 1$, the space that parametrizes the ${\rm PSL}_{2}({\mathbb C})$-equivalence classes of Schottky representations of $F_{g}$ (i.e., faithful homomorphisms  $\theta:F_{g} \to {\rm PSL}_{2}({\mathbb C})$ with $\theta(F_{g})$ a Schottky group)  is the 
marked Schottky space ${\mathcal MS}_{g}$. It is known that, if $\Gamma$ is a Schottky group of rank $g$, then ${\mathcal MS}_{g}$ is isomorphic to the quasiconformal deformation space ${\mathcal Q}(\Gamma)$ (see, for instance, \cite{Bers,B,Nag}); so a non-compact, connected and non-simply connected complex manifold. It is known that ${\mathcal MS}_{1}$ can be identified with ${\mathbb D}^{*}=\{z \in {\mathbb C}: 0<|z|<1\}$. 
If $g \geq 2$, then ${\mathcal MS}_{g}$ has dimension $3(g-1)$ and it is a domain of holomorphy of ${\mathbb C}^{3g-3}$ \cite{Nag}. If $g \geq 2$, then its group of holomorphic automorphisms is isomorphic to ${\rm Out}(F_{g})$ \cite{Earle} and it acts properly discontinuously on it; the quotient complex orbifold ${\mathcal S}_{g}={\mathcal MS}_{g}/{\rm Out}(F_{g})$, also of dimension $3(g-1)$, is called the Schottky space. This orbifold parametrizes the ${\rm PSL}_{2}({\mathbb C})$-conjugacy classes of Schottky groups of rank $g$.
Real structures (i.e., antiholomorphic automorphisms of order two) on ${\mathcal MS}_{g}$ were studied in \cite{HS}. All real structures of ${\mathcal MS}_{1}={\mathcal S}_{1}={\mathbb D}^{*}$ are conjugated to $j(z)=\overline{z}$, which has exactly two connected componets (the intersection of the real line with it). In ${\mathcal MS}_{2}$ there are exactly $4$ non-conjugated real structures: one without real points, one with exactly eight real components, one with three and one with four. If $g \geq 3$, then ${\mathcal MS}_{g}$ has exactly $T_{g}+1$ real structures, up to conjugation, where $T_{g}$ is the number of conjugacy classes of elements of order two in ${\rm Out}(F_{g})$.
In the same paper, it was also observed that a real point of a real structure of ${\mathcal MS}_{g}$, for $g \geq 2$,  
 can be identified with an extended Schottky group of rank $g$ and that each irreducible component of the locus of real points is a real analytic embedding of the quasiconformal deformation space of an extended Schottky group of rank $g$. 

If $g \geq 2$, then all real structures on ${\mathcal MS}_{g}$ induce the same real structure on the complex orbifold ${\mathcal S}_{g}$ (this one obtained by conjugating Schottky groups by the usual complex conjugation map $j(z)=\overline{z}$). In particular, the real points of real structures on ${\mathcal MS}_{g}$ are projected to real points of ${\mathcal S}_{g}$. Unfortunately, these are not all the real points of ${\mathcal S}_{g}$. A real point of ${\mathcal S}_{g}$ is the class of a Schottky group $\Gamma$ of rank $g$ which is ${\rm PSL}_{2}({\mathbb C})$-conjugated to $\overline{\Gamma}=j \Gamma j$, that is, there is a M\"obius transformation $A \in {\rm PSL}_{2}({\mathbb C})$ such that $\Gamma=T \Gamma T^{-1}=\Gamma$, where $T=A j$. The group $K=\langle T, \Gamma \rangle$ turns out to  be an extended ${\mathbb Z}_{2n}$-Schottky group of rank $g$, for a suitable integer $n \geq 1$.  Conversely, every  extended ${\mathbb Z}_{2n}$-Schottky group of rank $g$ defines a real point of ${\mathcal S}_{g}$. In this way, our structural description of these groups is a starting point to the study of the connectivity and the irreducible components of the locus of real points in ${\mathcal S}_{g}$.

\medskip

In terms of anticonformal automorphisms of closed Riemann surfaces, the above groups can be described as follows.
Let $S$ be a closed Riemann surface of genus $g$ and let $\tau:S \to S$ be an anticonformal automorphism of order $2n$, for some $n \geq 1$. If $n=1$, then $\tau$ is also called a symmetry (or a real structure) on $S$.  Koebe's retrosection theorem \cite{Bers,Koebe} asserts that there is a 
Schottky group $\Gamma$ of rank $g$, with region of discontinuity $\Omega$, such that $S=\Omega/\Gamma$ (we say that $S$ is uniformized by $\Gamma$). 
The planarity theorem \cite{Maskit:lowest} asserts that these correspond to the lowest uniformization of $S$. 
Sometimes it is possible to find such a Schottky group $\Gamma$ with the extra property that $\tau$ lifts, that is, there is an extended M\"obius transformation $\widehat{\tau}$ such that $\widehat{\tau} \;\Gamma \; \widehat{\tau}^{-1}=\Gamma$ (necessary and sufficient conditions, for $g \geq 2$, for the lifting to happen is the existence of a collection of pairwise disjoint simple loops which is invariant under $\tau$ and all of them cut-off $S$ into genus zero surfaces \cite{Hidalgo:SchottkyAuto, Hidalgo-Maskit}). In this case, 
$K=\langle \Gamma, \widehat{\tau} \rangle$ is an extended ${\mathbb Z}_{2n}$-Schottky group of rank $g$ and $K/\Gamma=\langle \tau \rangle$. 

\medskip

In terms of handlebodies, these groups are described as follows.
Let $M$ be a handlebody of genus $g$ and let $\tau:M \to M$ be an orientation-reversing homeomorphism of finite order $n$. It is known that there is a Schottky group $\Gamma$ of rank $g$ (so ${\mathbb H}^{3}/\Gamma$ is homeomorphic to the interior of $M$) such that $\tau$ acts as an isometry (for the induced hyperbolic metric). By lifting $\tau$ to ${\mathbb H}^{3}$, we obtain an extended M\"obius transformation $T$ (which necessarily self-conjugate $\Gamma$). In this case, $K=\langle T, \Gamma \rangle$ is an extended ${\mathbb Z}_{2n}$-Schottky group of rank $g$. Our structural description permits to describe the locus of fixed points of $\tau$ (Section \ref{handles}).

\section{Preliminaries}\label{Sec:Prelim}
In this section, we review some of the definitions, set some notations and recall some technical results we will need in this paper. Generalities on Kleinian and extended Kleinian groups can be found, for instance,  in the books \cite{Maskit:book, MT}. 

We use the symbol $\Gamma<K$ (respectively, $\Gamma \lhd K$) to say that $\Gamma$ is a subgroup (respectively, normal subgroup) of a group $K$. The composition of the maps $f$ and $h$ is as usually denoted by the symbol $f \circ h$, but if we are composing (extended) M\"obius transformations $A$ and $B$ we will use the symbol $AB$.

\subsection{Extended Kleinian groups}
Let ${\mathbb M}={\rm PSL}_{2}({\mathbb C})$ be the group of {\it M\"obius transformations} and $\widehat{\mathbb M}$ be the group generated by ${\mathbb M}$ and the complex conjugation $J(z)=\overline{z}$. A transformation in $\widehat{\mathbb M} \setminus {\mathbb M}$ is called an {\it extended M\"obius transformation}. It is well known that ${\mathbb M}$ (respectively, $\widehat{\mathbb M}$) is the full group of conformal (respectively, conformal an anticonformal) automorphisms of the Riemann sphere $\widehat{\mathbb C}$.
If $K<\widehat{\mathbb M}$, then we set $K^{+}:=K \cap {\mathbb M}$ and, when $K\neq K^{+}$, we say that $K^{+}$ is the {\it orientation-preserving half} of $K$. 

An extended M\"obius transformation whose square is an elliptic transformation is called {\it pseudo-elliptic} (if the square is the identity, then we say that it is a {\it reflection} if it has fixed points, otherwise it is called an {\it imaginary reflection}). Similarly, if the square is a loxodromic transformation (in fact a hyperbolic one), then we say that it is a {\it glide-reflection} and , if the square is parabolic, then we say that it is {\it pseudo-parabolic}.

A {\it Kleinian group} (respectively, an {\it extended Kleinian group}) is a discrete subgroup of ${\mathbb M}$ (respectively, a discrete subgroup of $\widehat{\mathbb M}$ necessarily containing extended M\"obius transformations). The {\it region of discontinuity} of a (extended) Kleinian group $K$ is the open set (which might be empty) $\Omega \subset \widehat{\mathbb C}$ consisting of those points on which $K$ acts discontinuously. The complement closed set $\Lambda=\widehat{\mathbb C} \setminus\Omega$ is called the {\it limit set} of $K$. 
If $\Lambda$ is finite (respectively, infinite), then $K$ is called {\it elementary} (respectively, {\it non-elementary}).  If $\Omega \neq \emptyset$, then we say that $K$ is of the second type.

If $K_{1}<K_{2}<\widehat{\mathbb M}$ and $K_{1}$ has finite index in $K_{2}$, then one is discrete if and only if the other is to; in which case both have the same region of discontinuity. In particular, if $K<\widehat{\mathbb M}$ contains extended M\"obius transformations, then $K$ is an extended Kleinian group if and only if  $K^{+}$ is a Kleinian group. 

A {\it function group} (respectively, an {\it extended function group}) is a Kleinian group (respectively, and extended Kleinian group) of the second type containing an invariant connected component of its region of discontinuity. Notice that if $K$ is an extended function group, then $K^{+}$ is a function group, but the converse is not in general true (but in the negative, $K^{+}$ is a quasifuchsian group and each element of $K \setminus K^{+}$ permutes the two components of $\Omega$). 

\subsection{Klein-Maskit's combination theorems}
The decomposition of function groups (respectively, extended function groups), in the sense of Klein-Maskit's combination theorems is provided in \cite{Maskit:function2, Maskit:function1, Maskit:function} (respectively,  in \cite{H:invariant}). It is a Kleinian group version of free products and HNN-extensions. Below, we state a simple version of the Klein-Maskit combination theorems which will be enough for us in this paper.

\begin{theo}[Klein-Maskit's combination theorems \cite{Maskit:Comb, Maskit:Comb4}]\label{KMC}
\mbox{}
\\
{\rm (1)} (Free products) 
For $j=1,2$, 
let $K_{j}$ be a (extended) Kleinian group with region of discontinuity $\Omega_{j}$ and let ${\mathcal F}_{j}$ be a fundamental domain for $K_{j}$. Assume that there is a simple closed loop $\Sigma$, contained in the interior of  ${\mathcal F}_{1} \cap {\mathcal F}_{2}$, bounding two discs $D_1$ and $D_2$, so that, for $j=1,2$, the set $\Sigma \cup D_{j} \subset  \Omega_{3-j}$ is precisely invariant under the identity in $K_{3-j}$. Then $K = \langle K_1, K_2\rangle$ is a (extended) Kleinian group, with fundamental domain ${\mathcal F}_{1} \cap {\mathcal F}_{2}$, which is the free product of $K_{1}$ and $K_{2}$. Every finite order element in $K$ is conjugated in $K$ to a finite order element of either $K_{1}$ or $K_{2}$. Moreover, if both $K_{1}$ and $K_{2}$ are geometrically finite, then $K$ is so.

\smallskip
\noindent
{\rm (2)} (HNN-extensions) Let $K_{0}$ be a (extended) Kleinian group with region of discontinuity $\Omega$ and let ${\mathcal F}$ be a fundamental domain for $K_{0}$. Assume that there are two pairwise disjoint simple closed loops $\Sigma_{1}$ and $\Sigma_{2}$, both of them contained in the interior of  ${\mathcal F}_{0}$, so that $\Sigma_{j}$ bounds a disc $D_{j}$ such that $(\Sigma_{1} \cup D_{1}) \cap (\Sigma_{2} \cup D_{2})=\emptyset$ and with $\Sigma_{j} \cup D_{j} \subset  \Omega$ precisely invariant under the identity in $K_{0}$. If $T$ is either a loxodromic transformation or a glide-reflection so that $T(\Sigma_{1})=\Sigma_{2}$ and $T(D_{1}) \cap D_{2}=\emptyset$, then $K = \langle K_{0}, f \rangle$ is a (extended) Kleinian group, with fundamental domain ${\mathcal F}_{1} \cap (D_{1} \cup D_{2})^{c}$, which is the HNN-extension of $K_{0}$ by the cyclic group $\langle T \rangle$.  Every finite order element of $K$ is conjugated in $K$ to a finite order element of $K_{0}$. Moreover, if $K_{0}$ is geometrically finite, then $K$ is so.
\end{theo}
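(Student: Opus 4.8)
The plan is to run, for both parts, the classical ping-pong-plus-tiling argument underlying all of Maskit's combination theorems; I describe part (1) in detail and then indicate the modifications for part (2). For the algebraic structure in (1), write $X_{j}=\Sigma\cup D_{j}$, so that $X_{1},X_{2}$ are the two closed discs cut off by $\Sigma$ and $\widehat{\mathbb C}\setminus X_{j}=D_{3-j}$. The precise-invariance hypothesis says that $g(X_{3-j})\cap X_{3-j}=\emptyset$ for every $g\in K_{j}\setminus\{1\}$; since such a $g$, being a (extended) M\"obius transformation, carries the closed disc $X_{3-j}$ onto a closed disc, this disjointness forces $g(X_{3-j})\subset D_{j}$. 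Given a reduced word $w=h_{1}h_{2}\cdots h_{m}$ with $h_{k}\in K_{i_{k}}\setminus\{1\}$ and $i_{k}\ne i_{k+1}$, a downward induction on $k$ using this containment shows that $w(X_{3-i_{1}})$ is a compact subset of $D_{i_{m}}$ when $i_{1}\ne i_{m}$, and a proper compact subset of $X_{3-i_{1}}$ when $i_{1}=i_{m}$; in either case $w\ne 1$. Hence the canonical surjection $K_{1}*K_{2}\to K$ is an isomorphism, and the statement about finite-order elements follows from the standard fact that torsion in a free product is conjugate into one of the factors (for instance via the action on the Bass--Serre tree), where it becomes a finite-order element of $K_{1}$ or $K_{2}$.

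For discreteness and the fundamental domain in (1), set $\mathcal F=\mathcal F_{1}\cap\mathcal F_{2}$, which has non-empty interior since $\Sigma\subset\mathrm{int}(\mathcal F_{1}\cap\mathcal F_{2})$. What must be shown is that $\{w(\mathcal F):w\in K\}$ is a tiling: distinct members have disjoint interiors and the family is locally finite over the union of those interiors. A word-length induction refining the one above — tracking how each translate $w(\mathcal F)$ is nested inside one of the discs $w(D_{i})$ and abuts its neighbours only along images of $\Sigma$ — yields both the disjoint-interiors property and local finiteness, the inductive step using the fundamental-domain property of $\mathcal F_{1},\mathcal F_{2}$ and the local finiteness of the tilings they define. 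Consequently $K$ acts discontinuously on $\bigcup_{w\in K}w(\mathrm{int}\,\mathcal F)$, so $K$ is discrete, its region of discontinuity is non-empty, and $\mathcal F$ is a fundamental domain for it. For the geometric-finiteness clause one glues convex fundamental polyhedra $P_{1},P_{2}$ for $K_{1},K_{2}$ in ${\mathbb H}^{3}$ along the totally geodesic plane spanned by $\Sigma$, which separates ${\mathbb H}^{3}$ into half-spaces over $D_{1}$ and $D_{2}$, producing a convex fundamental polyhedron for $K$ with finitely many faces.

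Part (2) is entirely parallel. Here the associated subgroups of the HNN extension are both trivial, so algebraically $K=\langle K_{0},T\rangle\cong K_{0}*\langle T\rangle$, with $\langle T\rangle$ infinite cyclic because $T$ is loxodromic or a glide-reflection. One runs the ping-pong with the two discs $D_{1},D_{2}$, using $T(\Sigma_{1})=\Sigma_{2}$ together with $T(D_{1})\cap D_{2}=\emptyset$ — which forces $T$ to carry the closed complement of $D_{1}$ onto $\overline{D_{2}}$ and $T^{-1}$ the closed complement of $D_{2}$ onto $\overline{D_{1}}$ — to obtain the analogue of Britton's lemma: no word involving the stable letter with all $K_{0}$-syllables non-trivial collapses to the identity. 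The tiling argument then gives discreteness with fundamental domain $\mathcal F_{0}\cap(D_{1}\cup D_{2})^{c}$, the statement on finite-order elements follows from the torsion-freeness of $\langle T\rangle$ together with the free-product decomposition, and geometric finiteness again from a polyhedron-gluing, this time across the planes spanned by $\Sigma_{1}$ and $\Sigma_{2}$.

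The step I expect to be the main obstacle is, in both parts, the passage from the purely algebraic ping-pong to the geometric conclusion that the candidate fundamental domain has a \emph{locally finite} orbit, hence actually tiles its union of translates: one must verify that the translates of $\mathcal F$ (resp.\ of $\mathcal F_{0}\cap(D_{1}\cup D_{2})^{c}$) do not accumulate inside the region they cover, and one must reconcile the stated hypotheses — which only place $\Sigma$, resp.\ $\Sigma_{1},\Sigma_{2}$, in the interiors of the given fundamental domains — with the cleaner nested configuration used in the inductions, typically by a preliminary isotopy of the loops. Everything else is routine bookkeeping or standard free-product and HNN-extension group theory.
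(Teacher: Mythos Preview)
The paper does not prove this theorem: it is stated as a background result with attribution to Maskit \cite{Maskit:Comb, Maskit:Comb4}, and no argument is given. So there is no ``paper's own proof'' to compare against; your sketch is essentially the classical ping-pong-and-tiling argument that Maskit himself develops in those references (and in Chapter VII of \cite{Maskit:book}), and at the level of an outline it is correct. The one point worth flagging is that you have slightly overstated the algebraic structure in part (2): in the generality of the theorem the HNN extension need not split as a genuine free product $K_{0}*\langle T\rangle$ --- that decomposition is what one gets precisely when the amalgamated subgroup is trivial, which is indeed the case here since both $\Sigma_{j}\cup D_{j}$ are precisely invariant under the identity --- so your reduction is valid, but it is worth saying explicitly that this is the degenerate case of the HNN combination rather than the general one. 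Beyond that, the obstacles you identify (local finiteness of the tiling, and the need to arrange the loops relative to the fundamental domains) are exactly the technical content of Maskit's proofs, and your outline tracks them faithfully.
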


\subsection{Schottky groups: a geometrical picture}\label{Sec:Schottkygroups}
The {\it Schottky group of rank $0$} is just the trivial group. A {\it Schottky
group of rank $g \geq 1$} is a Kleinian group $\Gamma$ generated by loxodromic
transformations $A_1,\ldots,A_g$, so that there are $2g$ pairwise disjoint simple loops,
$\Sigma_{1},\Sigma'_{1},\ldots,\Sigma_{g},\Sigma'_{g}$, bounding a $2g$-connected domain ${\mathcal D}\subset \widehat{\mathbb C}$, where $A_i(\Sigma_i)=\Sigma'_i$, and $A_i({\mathcal D})\cap {\mathcal D}=\emptyset$, for $i=1,\ldots,g$. The collection of loops $\Sigma_{1}$, $\Sigma'_{1}$,..., $\Sigma_{g}$ and $\Sigma'_{g}$ is called a {\it fundamental set of loops}  for $\Gamma$ with respect to the above generators (these groups are constructed, using part (1) in Klein-Maskit's combination theorem, as the free product of cyclic loxodromic groups). The region of discontinuity $\Omega$ of a Schottky group $\Gamma$ of rank $g$ is known to be  connected and dense in $\widehat{\mathbb C}$ and  that $S=\Omega/\Gamma$ is a closed Riemann surface of genus $g$ (Koebe's retrosection theorem \cite{Bers,Koebe} states that, up to conformal isomorphism, every closed Riemann surface is obtained in this way). 

\begin{rema}
A Schottky group of rank $g$ can be defined as a purely loxodromic Kleinian group of the second kind which
is isomorphic to a free of rank $g$ \cite{Maskit:Schottky groups}.  Also, it can be defined as a purely loxodromic and geometrically finite Kleinian group, isomorphic to a free group. The geometrical definition permits to observe that any two Schottky groups of same rank are topologically conjugated.
\end{rema}

\subsection{Uniformizations}
Let $S$ be a closed Riemann surface.
A triple $(\Delta,\Gamma,P)$ is called an {\it uniformization} of $S$  if $\Gamma$ is a Kleinian group, $\Delta$ is a $\Gamma$-invariant connected component of its region of discontinuity and $P:\Delta \to S$ is a regular planar covering with $\Gamma$ as its group of deck transformations. In \cite{Maskit:lowest}, Maskit provided a description of all uniformizations (regular planar coverings) in terms of a (not unique) collection of pairwise disjoint loops. The collection of uniformizations of $S$ is partially ordered in the sense that $(\Delta_{1},\Gamma_{1},P_{1})$ is higher than $(\Delta_{2},\Gamma_{2},P_{2})$ if there is a covering map $Q:\Delta_{1} \to \Delta_{1}$ so that $P_{1}=P_{2} \circ Q$. Let us consider an uniformization $(\Delta,\Gamma,P)$. It is a highest  one (with respect to the previous partial ordering)  if and only if $\Delta$ is simply-connected. By the results in \cite{Maskit:lowest,Tamura}, it is a lowest one if $\Gamma$ is a Schottky group of rank $g$ (in which case $\Delta$ is the region of discontinuity of $\Gamma$); we call it a  {\it Schottky uniformization} of $S$. 

\subsection{A lifting property}
A conformal (respectively, anticonformal) automorphism $\tau$ of $S$ {\it lifts} with respect to an uniformization
$(\Delta,\Gamma,P)$ if there is an automorphism $\kappa$ of $\Delta$ so that $P \circ \kappa=\tau \circ P$.  A group $H$ of (conformal/anticonformal) automorphisms of $S$ {\it lifts} with respect to the above uniformization if and only if each of its elements lifts.

\begin{rema}
Let $(\Delta,\Gamma,P)$ be a uniformization of $S$ and let $H<Aut(S)$ be a group that lifts. A lifting $k \in {\rm Aut}(\Delta)$ of $h \in H$ is not required to be the restriction of a (extended) M\"obius transformation. If $K$ is the group generated by all these liftings, then $K<Aut(\Delta)$, $\Gamma \lhd K$ and $K/\Gamma \cong H$, but, in general, $K$ is not a group of (extended) M\"obius transformations. Now, if the above is a Schottky uniformization (so $\Delta=\Omega$, the region of discontinuity of $\Gamma$), then 
$\Omega$ is of class $O_{AD}$ (that is, it admits no holomorphic function with finite Dirichlet norm \cite[pg 241]{A-S}). It follows that a conformal (respectively, anticonformal) automorphism of $\Omega$ is the restriction of a M\"obius (respectively, extended M\"obius) transformation. It follows that $K$ is a (extended) Kleinian group $\Gamma$ as a finite index normal subgroup with $K/\Gamma \cong H$. 
\end{rema}

\begin{rema}
In \cite{H-cota} it was noted that if $H$ lifts with respect to a uniformization of $S$ which is not a highest one, then the order of $H$ is at most $24(g-1)$ (and if $H$ only consists of conformal automorphisms, then its order is at most $12(g-1)$).
\end{rema}

If $\Delta$ is simply-connected, clearly every automorphism of $S$ lifts.  If $\Delta$ is not simply-connected, it may be that some automorphism of $S$ does not lift to an automorphism of $\Delta$. The following result provides necessary and sufficient conditions for the lifting property to hold in the case of Schottky uniformizations.

\begin{theo}\cite{Hidalgo:SchottkyAuto,Hidalgo-Maskit}\label{teo1}
Let $(\Omega,\Gamma,P)$ be a Schottky uniformization of the closed Riemann surface $S$ of genus $g \geq 2$.
Let $H$ be a group of automorphisms (conformal/anticonformal) of $S$. Then,  $H$ lifts with respect to the above Schottky uniformization if and only if
there is a collection $\mathcal F$ of pairwise disjoint simple loops on $S$ such that:
\begin{itemize}
\item[(i)] each connected component of $S \setminus \mathcal F$ is a planar surface; 
\item[(ii)] $\mathcal F$ is invariant under the action of $H$; and
\item[(iii)] for each $\alpha \in {\mathcal F}$, $P^{-1}(\alpha)$ is a collection of pairwise disjoint simple loops in $\Omega$.
\end{itemize}
\end{theo}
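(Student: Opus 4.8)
The plan is to pass to the handlebody $M$ with interior $\mathbb{H}^{3}/\Gamma$ and boundary $\partial M=S=\Omega/\Gamma$ (so $\pi_{1}(M)\cong\Gamma\cong F_{g}$, because $\mathbb{H}^{3}\cup\Omega$ is contractible and $\Gamma$ acts freely on it), and to use throughout the following translation of condition (iii). Since $P\colon\Omega\to S$ is the regular cover with deck group $\Gamma$, there is an exact sequence $1\to\pi_{1}(\Omega)\to\pi_{1}(S)\xrightarrow{\iota_{*}}\Gamma\to1$, where $\iota\colon S\hookrightarrow M$; because $\Gamma$ is torsion free, for a simple loop $\alpha\subset S$ each component of $P^{-1}(\alpha)$ is a loop (rather than a line) exactly when $\iota_{*}[\alpha]=1$, i.e.\ exactly when $\alpha$ bounds in $M$. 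Granting this dictionary, a collection $\mathcal{F}$ of disjoint simple loops satisfies (i) and (iii) if and only if it is the system of boundary curves of a complete system of disjoint, properly embedded meridian disks $\mathcal{D}$ of $M$: the essential loops of $\mathcal{F}$ bound properly embedded disks in $M$ by Dehn's lemma, and cutting $M$ along $\mathcal{D}$ leaves a disjoint union of handlebodies whose boundaries are the planar pieces of $S\setminus\mathcal{F}$ with a disk glued along every boundary circle, hence spheres, so those handlebodies are balls and $\mathcal{D}$ is complete. Under this correspondence, (ii) is exactly the $H$-invariance of $\mathcal{D}$.

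For the implication assuming $\mathcal{F}$ exists and proving that $H$ lifts, let $N\lhd\pi_{1}(S)$ be the normal closure of $\{[\alpha]:\alpha\in\mathcal{F}\}$. Condition (iii) gives $N\subseteq\pi_{1}(\Omega)$, and condition (i) gives, by a routine Euler-characteristic and van Kampen computation (part of Maskit's planarity theorem \cite{Maskit:lowest}), that $\pi_{1}(S)/N$ is free of rank $g$. Since $\pi_{1}(S)/\pi_{1}(\Omega)\cong\Gamma\cong F_{g}$ and $N\subseteq\pi_{1}(\Omega)$, the induced surjection $\pi_{1}(S)/N\twoheadrightarrow\pi_{1}(S)/\pi_{1}(\Omega)$ is a surjection between two copies of a Hopfian group, hence an isomorphism, so $N=\pi_{1}(\Omega)$. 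Now fix $h\in H$; by (ii) it permutes, up to isotopy, the loops of $\mathcal{F}$, so the induced automorphism $h_{*}$ of $\pi_{1}(S)$ (defined up to inner automorphism) permutes the corresponding conjugacy classes and therefore preserves their normal closure: $h_{*}(\pi_{1}(\Omega))=h_{*}(N)=N=\pi_{1}(\Omega)$. Since $P$ is a regular covering, the covering-space lifting criterion yields a homeomorphism $\kappa$ of $\Omega$ with $P\circ\kappa=h\circ P$, and $\kappa$ is conformal or anticonformal because $P$ is holomorphic and $h$ is. Hence every element of $H$ lifts, so $H$ lifts.

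For the converse, assume $H$ lifts. By the Remark above (using that $\Omega$ is of class $O_{AD}$), the liftings of the elements of $H$ generate an (extended) Kleinian group $K<\widehat{\mathbb{M}}$ with $\Gamma\lhd K$ of finite index, $K/\Gamma\cong H$, and region of discontinuity $\Omega$; consequently $H=K/\Gamma$ acts on the handlebody $M$ by (extended) isometries, preserving $\partial M=S$ and inducing on it the given action. Since $g\ge2$, $\ker(\pi_{1}(S)\to\pi_{1}(M))$ is nontrivial, so the equivariant Dehn lemma and loop theorem of Meeks and Yau, applied to the finite group $H$ acting on $M$, provide a nonempty $H$-invariant family of disjoint, properly embedded, essential meridian disks. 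Cutting $M$ along this family leaves a $3$-manifold with boundary still carrying an action of $H$ (permuting its components and the new copies of the disks) and of strictly smaller total genus; iterating, and stopping once no component carries a nontrivial kernel, one obtains an $H$-invariant complete system $\mathcal{D}$ of disjoint meridian disks, so that $M$ cut along $\mathcal{D}$ is a disjoint union of balls. Then $\mathcal{F}:=\bigcup_{D\in\mathcal{D}}\partial D\subset S$ is a family of disjoint simple loops, it is $H$-invariant (giving (ii)), each of its loops bounds a disk in $M$ and hence satisfies (iii) by the dictionary above, and each component of $S\setminus\mathcal{F}$ lies on one of the boundary spheres, hence is planar (giving (i)).

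The main obstacle is the converse direction: the equivariant loop theorem is the one genuinely nontrivial input, and some care is needed to run it inductively so that the accumulated disk system stays globally $H$-invariant and embedded (the stabilizers of the successive pieces act on those pieces, and the new equivariant disks must be chosen disjoint from those already selected). In the forward direction, the only step that is not purely formal is the identification of the covering determined by $\mathcal{F}$ with the given Schottky covering, which rests on Maskit's planarity theorem together with the Hopf property of finitely generated free groups.
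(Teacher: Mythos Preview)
The paper does not actually supply a proof of this theorem: it is quoted from \cite{Hidalgo:SchottkyAuto,Hidalgo-Maskit}, and the only discussion is the Remark immediately following it, which records that the result can be obtained either via the Equivariant Loop Theorem of Meeks--Yau or via the purely two-dimensional Kleinian-group argument of \cite{Hidalgo:SchottkyAuto}. Your converse direction is precisely the first of these routes, so in that sense you are aligned with one of the two approaches the paper acknowledges; the paper's preferred reference \cite{Hidalgo:SchottkyAuto} avoids the three-dimensional minimal-surface machinery and works instead with planar coverings and the hyperbolic metric on $\Omega$.

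On correctness: your dictionary between (iii) and ``$\alpha$ bounds a disk in $M$'' is right (the key point being that $\Gamma$ is torsion free, so a lift of $\alpha$ closes up if and only if $[\alpha]$ dies in $\Gamma$), and the forward direction is clean --- the identification $N=\pi_{1}(\Omega)$ via Hopficity of $F_{g}$ is exactly the right way to pin down the covering, and then (ii) forces $h_{*}$ to preserve $N$. For the converse, your use of the $O_{AD}$ remark to promote liftings to (extended) M\"obius transformations, and hence to isometries of $M$, is legitimate and is what makes the Meeks--Yau theorem applicable. The inductive cutting argument is standard; the care you flag (keeping the accumulated disk system globally $H$-invariant and disjoint as one passes to pieces acted on only by their stabilizers) is real, but is handled in the usual way by applying the equivariant loop theorem to the full $H$-orbit of a piece at each stage. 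So the argument is sound.

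In short: your proof is correct and follows the Meeks--Yau route that the paper explicitly mentions as an alternative; the paper itself gives no proof here, and its cited source \cite{Hidalgo:SchottkyAuto} takes the other, Kleinian-group-theoretic, path.
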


A collection of loops ${\mathcal F}$, as in Theorem \ref{teo1}, which is minimal
(in the sense that by deleting a non-empty sub-collection from ${\mathcal F}$, then one of the above three properties fails)  will be called a {\it fundamental collection of loops} associated to the pair  $\{(\Omega,G,P),H\}$.

\begin{rema}
We should note that Theorem \ref{teo1} can be seen a consequence of the Equivariant Loop Theorem \cite{Y-M}, whose
proof is  based on minimal surfaces, that is, surfaces that minimize locally the area. The proof given in
\cite{Hidalgo:SchottkyAuto} only uses arguments proper to (planar) Kleinian groups and the hyperbolic metric.
\end{rema}

\subsection{Hyperbolic extensions}

Each M\"obius (respectively, extended M\"obius) transformation acts (by Poincar\'e's extension) as an orientation-preserving (respectively, orientation-reversing) isometry of the hyperbolic $3$-space ${\mathbb H}^{3}=\{(z,t) \in {\mathbb C} \times {\mathbb R}: t>0\}$ with the hyperbolic metric $ds^{2}=(|dz|^2+dt^2)/t^2$. If $\Gamma$ is a (extended) Kleinian group, then ${\mathcal O}^{3}_{\Gamma}={\mathbb H}^{3}/\Gamma$ is a $3$-dimensional hyperbolic orbifold and the $2$-dimensional orbifold ${\mathcal O}^{2}_{\Gamma}=\Omega/\Gamma$ is its conformal boundary. In the case that $\Gamma$ is a torsion free Kleinian group, then ${\mathcal O}^{3}_{\Gamma}$ is a hyperbolic $3$-manifold and ${\mathcal O}_{\Gamma}^{2}$ a Riemann surface. 

If $\Gamma$ is a Schottky group of rank $g$, then ${\mathcal O}_{\Gamma}^{3}$ is homeomorphic to the interior of a handlebody of genus $g$ and its conformal boundary ${\mathcal O}_{\Gamma}^{2}$ is a closed Riemann surface of genus $g$. 

Every torsion free Kleinian group $\Gamma$, for which ${\mathcal O}_{\Gamma}^{3}$ is homeomorphic to the interior of a handlebody of genus $g$, is a Kleinian group isomorphic to the free group $F_{g}$. The conformal boundary $\Omega/\Gamma$ coincides with the topological boundary if and only if $\Gamma$ is a Schottky group of rank $g$. This is equivalent to say that $\Gamma$ is geometrically finite (i.e., it has a finite sided fundamental polyhedron) and that ${\mathbb H}^{3}/\Gamma$ has injectivity radius bounded away from zero (this is equivalent to the purely loxodromic property).

\section{Decomposition structure of extended ${\mathbb Z}_{2n}$-Schottky groups}
Let us recall that an extended ${\mathbb Z}_{2n}$-Schottky group of rank $g$ is an extended Kleinian group $K$ containing as a normal subgroup a Schottky group $\Gamma$ of rank $g$ such that $K/\Gamma \cong {\mathbb Z}_{2n}$.

In this section, we provide our main result, a general structure picture, in terms of Klein-Maskit's combination theorems, of extended ${\mathbb Z}_{2n}$-Schottky groups. In Section \ref{Sec:basico}, we define the basic groups of type $n$. In Section \ref{Sec:general} we provide a necessary and sufficient condition for a group obtained by applying the Klein-Maskit's combination theorem on them to be an extended ${\mathbb Z}_{2n}$-Schottky group. Finally, in Section \ref{Sec:picture}, we provide the main result that states that all of these extended groups are so obtained. So, from now on, we fix an integer $n \geq 1$.

\subsection{Basic groups of type $n$}\label{Sec:basico} 
To state our structural description of the extended ${\mathbb Z}_{2n}$-Schottky groups, we will need some elementary (extended) Kleinian groups (see Figure \ref{grafica1}), which we will call the {\it basic groups of type $n$}.
\begin{itemize} 
\item[(T0).-] Cyclic groups generated by a loxodromic transformation.

\item[(T1).-] Cyclic groups generated by a glide-reflection transformation.

\item[(T2).-] Cyclic groups generated by elliptic transformation of order a divisor of $n$.

\item[(T3).-] Cyclic groups generated by a pseudo-elliptic transformation of order $2d$, where $d$ is a divisor of $n$, but $2d$ is not a divisor of $n$.

\item[(T4).-] Abelian groups generated by a loxodromic transformation and an elliptic transformation of order a divisor of $n$ (in particular, both fixed points of the elliptic are the same as for the loxodromic).

\item[(T5).-] Groups generated by a loxodromic transformation $A$ and a pseudo-elliptic transformation $B$, of order a divisor of $2n$ but not of $n$, so that $B^{-1}ABA=I$ (in particular, both fixed points of the loxodromic transformation are permuted by the pseudo-elliptic transformation).

\item[(T6).-] If $n$ is even, groups generated by a glide-reflection transformation $A$ and an elliptic transformation $B$ of order $2$ so that $BA=AB=I$ (in particular, both fixed points of the elliptic are the same as for the glide-reflection).

\item[(T7).-] If $n$ is odd, cyclic groups of order $2$ generated by reflections.

\item[(T8).-] If $n$ is odd, groups generated by the reflection of a circle $\Sigma$ and a 
discrete group $F$ (of orientation-preserving conformal automorphisms) keeping invariant $\Sigma$ and 
so that $\Omega_{F}/F$ (where $\Omega_{F}$ is the region of discontinuity of $F$) is a connected Riemann orbifold whose conical points have orders divisors of  $n$. 

\end{itemize}

\begin{figure}
\centering
\includegraphics[width=9cm]{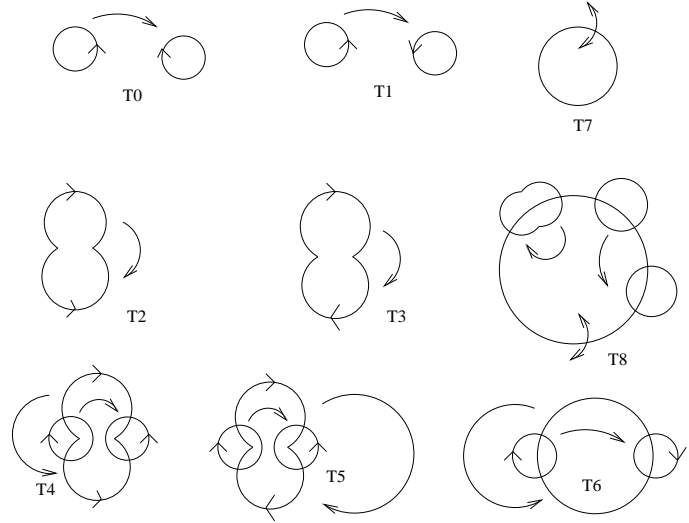}
\caption{Basic groups of type $n$}
\label{grafica1}
\end{figure}

\subsection{General groups of type $n$}\label{Sec:general}
A group, constructed by use of Klein-Maskit's combination theorem (1), by using the basic groups of type $n$,  will be called a {\it general group of type $n$}. 
Those basic groups of type $n$ which are extended ${\mathbb Z}_{2m}$-Schottky groups, where $m$ is divisor of $n$ (necessarily of rank $0$ and $1$) are given by (T1), (T3), (T5), (T6), (T7) and (T8). 
There are general groups of type $n$ which are not extended ${\mathbb Z}_{2n}$-Schottky groups.

If $K_{*}$ is a general group, then, as consequence of Klein-Maskit's combination theorems, the following properties can be observed.

\begin{enumerate}
\item $K_{*}$ is a discrete group;
\item the limit set of $K_{*}$ is a Cantor set;
\item $K_{*}^{+}$ contains no parabolic transformations;
\item every non-loxodromic transformation in $K_{*}^{+}$ is either the identity or conjugated to a power of some elliptic or pseudo-elliptic generator used in the basic groups construction.
\end{enumerate}

The following gives a necessary and sufficient condition for a general group of type $n$ to be an extended ${\mathbb Z}_{2n}$-Schottky group.

\begin{prop} \label{propo1}
A general group $K_{*}$ is a extended ${\mathbb Z}_{2n}$-Schottky group  if and only if 
\begin{enumerate}
\item $K_{*}$ contains orientation-reversing transformations, and 
\item there is a surjective homomorphism
$$\Phi:K_{*} \to {\mathbb Z}_{2n}=\langle x : x^{2n}=1\rangle$$ 
with torsion free kernel containing only orientation-preserving transformations. 
\end{enumerate}
\end{prop}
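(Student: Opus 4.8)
The plan is to prove the two implications separately; the forward direction is immediate from the definitions, while the converse carries all the weight and rests on the Kurosh subgroup theorem together with the four general-group properties listed above. For \emph{necessity}, suppose $K_{*}$ is an extended ${\mathbb Z}_{2n}$-Schottky group of rank $g$, so there is a Schottky group $\Gamma\lhd K_{*}$ of rank $g$ with $K_{*}/\Gamma\cong{\mathbb Z}_{2n}$. I take $\Phi$ to be the composition $K_{*}\to K_{*}/\Gamma\xrightarrow{\;\sim\;}{\mathbb Z}_{2n}$; it is surjective and $\ker\Phi=\Gamma$. Since a Schottky group is a Kleinian group (hence a subgroup of ${\mathbb M}$) which is purely loxodromic and free, $\Gamma$ contains no orientation-reversing transformation and no non-trivial element of finite order, which is condition (2); condition (1) holds because an extended Kleinian group contains extended M\"obius transformations by definition.

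For \emph{sufficiency}, assume (1) and (2) and set $\Gamma:=\ker\Phi\lhd K_{*}$. By surjectivity of $\Phi$ we have $K_{*}/\Gamma\cong{\mathbb Z}_{2n}$, so $\Gamma$ has index $2n$ in $K_{*}$; also $K_{*}$ is an extended Kleinian group, being discrete by general-group property (1) and containing orientation-reversing transformations by hypothesis (1). It therefore suffices to prove that $\Gamma$ is a Schottky group of some finite rank $g$, since then $K_{*}$ is an extended ${\mathbb Z}_{2n}$-Schottky group of rank $g$. I would use the characterization recalled in Section~\ref{Sec:Schottkygroups}, that a purely loxodromic Kleinian group of the second kind which is isomorphic to a free group of finite rank $g$ is a Schottky group of rank $g$; so it is enough to check: (a) $\Gamma$ is a Kleinian group; (b) $\Gamma$ is of the second kind; (c) $\Gamma$ is purely loxodromic; (d) $\Gamma$ is free of finite rank.

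Items (a)--(c) are short. For (a): by hypothesis (2) we have $\Gamma\subset K_{*}^{+}\subset{\mathbb M}$, and $\Gamma$ is discrete as a subgroup of the discrete group $K_{*}$. For (b): $\Gamma$ has finite index in $K_{*}$, so by the finite-index observation of Section~\ref{Sec:Prelim} it shares the region of discontinuity of $K_{*}$, which is non-empty since the limit set of the general group $K_{*}$ is a Cantor set (general-group property (2)). For (c): general-group property (3) forbids parabolics in $K_{*}^{+}$, and general-group property (4) says that every non-loxodromic element of $K_{*}^{+}$ is the identity or conjugate to a power of an elliptic or pseudo-elliptic generator, hence of finite order; since $\Gamma$ is torsion-free by (2), every non-trivial element of $\Gamma$ is loxodromic. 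For (d): by Klein-Maskit's combination theorem~\ref{KMC}(1), $K_{*}$ is, as an abstract group, the free product $G_{1}*\cdots*G_{r}$ of the (finitely many) basic groups of type $n$ used in its construction, hence finitely generated, so the finite-index subgroup $\Gamma$ is finitely generated as well; by the Kurosh subgroup theorem $\Gamma$ is a free product of a free group together with conjugates of subgroups of the $G_{i}$, all of which are torsion-free because $\Gamma$ is. The point is then that every torsion-free subgroup of every basic group of type $n$ is free: for the cyclic and virtually-cyclic types (T0)--(T7) a torsion-free subgroup is trivial or infinite cyclic, while for (T8) the factor is $\langle\text{reflection}\rangle\times F$, with $F$ a group preserving a circle whose quotient orbifold is connected, hence itself a free product of cyclic groups, so all of its subgroups are free. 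Consequently $\Gamma$ is a free product of free groups, hence free of some finite rank $g$, and by the cited characterization it is a Schottky group of rank $g$.

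The step I expect to be the main obstacle is the case-by-case verification in (d) that each basic group of type $n$ has only free torsion-free subgroups --- in particular, ruling out surface subgroups inside the non-abelian factors (T5), (T6) and (T8), equivalently checking that those groups split as free products of cyclic groups. Everything else reduces to bookkeeping with the four general-group properties and with the standard characterizations of Schottky groups recalled in Section~\ref{Sec:Schottkygroups}.
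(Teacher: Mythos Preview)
Your argument is correct, and it takes a genuinely different route from the paper's. The paper disposes of the non-trivial implication in two lines: since $\Gamma_{*}=\ker\Phi$ has finite index in $K_{*}$, it inherits the Cantor limit set, so its region of discontinuity is connected and $\Gamma_{*}$ is a function group; being torsion-free and without parabolics (general-group properties (3) and (4)) it is purely loxodromic, and Maskit's classification of function groups then forces it to be a Schottky group. Freeness is never checked directly --- it comes out of the classification machinery.

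You instead prove freeness by hand via the Kurosh subgroup theorem and then invoke only the elementary characterization of Schottky groups recalled in Section~\ref{Sec:Schottkygroups}. This trades a heavy external result for a transparent group-theoretic argument plus the case check you flag in (d). That check is routine for (T0)--(T7), all of which are virtually cyclic, so their torsion-free subgroups are trivial or infinite cyclic. For (T8) your sketch is right but the justification ``quotient orbifold connected, hence free product of cyclics'' is not the real reason: what matters is that the reflection in $\Sigma$ commutes with every M\"obius transformation preserving $\Sigma$, so the factor is $\langle\sigma\rangle\times F\cong{\mathbb Z}_{2}\times F$ and any torsion-free subgroup injects into $F$; and $F$, being a discrete circle-preserving group whose limit set sits inside the Cantor limit set of $K_{*}$, is of the second kind and hence a free product of cyclic groups (so its torsion-free subgroups are free, again by Kurosh). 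No surface subgroups arise. One small slip: you write ``all of its subgroups are free'' where you mean all of its \emph{torsion-free} subgroups. Both approaches work; the paper's is shorter, yours is more elementary and makes the algebraic structure explicit.
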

\begin{proof}
One direction is clear. The other one is consequence of the following. If $\Gamma_{*}$ is the kernel of $\Phi$ as required, then it is a function group containing only loxodromic transformations and whose limit set is a Cantor set. Then, as consequence of the classification of function groups \cite{Maskit:function} $\Gamma_{*}$, is a Schottky group.
\end{proof}

\subsection{Structural decomposition of extended ${\mathbb Z}_{2n}$-Schottky groups}\label{Sec:picture}
In Proposition \ref{propo1} we have seen that certain general groups of type $n$ are in fact extended ${\mathbb Z}_{2n}$-Schottky groups. The following result states the converse, that is, that every extended ${\mathbb Z}_{2n}$-Schottky groups is a general group of type $n$.

\begin{theo}[Decomposition theorem for extended ${\mathbb Z}_{2n}$-Schottky groups]\label{clasifica1}  
\mbox{}
\begin{enumerate}
\item Every extended ${\mathbb Z}_{2n}$-Schottky group is a general group of type $n$, where at least one of the basic groups of type (T1), (T3), (T5), (T6), (T7) or (T8) is used in its construction.

\item A general group of type $n$ an extended ${\mathbb Z}_{2n}$-Schottky group if and only if both conditions below are satisfied.
\begin{enumerate}
\item In the construction there appears at least one of the types (T1), (T3), (T5), (T6), (T7) or (T8).

\item If $n \geq 2$ and there are not groups of types (T0), (T1) and (T6) and also no groups of type (T8) containing a glide-reflection, then the greater common divisor of all the values of the form $2n/r$, were $r$ runs over all orders of elliptic and pseudo-elliptic transformations used in the basic groups of type (T2), (T3), (T4), (T5), (T7) and (T8), is $1$.

\end{enumerate}
\end{enumerate}
\end{theo}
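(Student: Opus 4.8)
The plan is to prove part (2) first as the technical heart, and then derive part (1) by showing that any extended $\mathbb{Z}_{2n}$-Schottky group $K$ admits a decomposition into basic groups of type $n$ to which part (2) applies. For part (2), the "if" direction amounts to invoking Proposition \ref{propo1}: given a general group $K_{*}$ of type $n$ satisfying (a) and (b), I must produce the surjection $\Phi:K_{*}\to\mathbb{Z}_{2n}$ with torsion-free, orientation-preserving kernel. Since $K_{*}$ is a free product (in the Klein-Maskit sense) of the basic pieces, a homomorphism is determined by its values on the generators of each factor, so I would \emph{construct} $\Phi$ piece by piece: loxodromic generators (types T0, T4) may be sent to any element of $\mathbb{Z}_{2n}$, typically $0$; elliptic generators of order $r\mid n$ must go to an element of order dividing $r$ in $\mathbb{Z}_{2n}$, i.e.\ to a multiple of $2n/r$; pseudo-elliptic generators (T3, T5) and glide-reflections (T1) and reflections (T7), and the orientation-reversing generators in T6, T8, must be sent to elements outside $2\mathbb{Z}_{2n}$ so that the kernel is orientation-preserving, while their squares force the image of the square to be $2\times(\text{that odd element})$. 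The point of condition (a) is precisely that such an orientation-reversing generator exists, so the image is not contained in $2\mathbb{Z}_{2n}$; the point of condition (b) is a gcd/divisibility obstruction: the images of the elliptic and pseudo-elliptic generators must \emph{together} generate (with the orientation-reversing part) all of $\mathbb{Z}_{2n}$, and when no "free" loxodromic, glide-reflection, or T8-with-glide-reflection piece is present to absorb the slack, surjectivity of $\Phi$ forces the listed gcd to be $1$. The kernel being torsion free then follows from the last bullet in Section \ref{Sec:general} (every finite-order element of $K_{*}$ is conjugate to a power of one of the elliptic/pseudo-elliptic generators, and by construction $\Phi$ is injective on each such cyclic subgroup), and the kernel being orientation preserving is immediate from $\Phi(K_{*}\setminus K_{*}^{+})\subset 2\mathbb{Z}_{2n}+1$. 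Then Proposition \ref{propo1} finishes it. The "only if" direction of (2): if $\Gamma_{*}=\ker\Phi$ is a Schottky group, then $\Phi$ restricted to each basic factor is injective on its torsion, which forces the orders $r$ to divide $2n$; that $K_{*}$ has orientation-reversing elements and $\Phi$ surjective then forces, by running the same divisibility bookkeeping backward, both (a) (some generator maps to an odd class, and such a generator is of type T1, T3, T5, T6, T7, or T8) and (b).

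For part (1), let $K$ be an extended $\mathbb{Z}_{2n}$-Schottky group of rank $g$, with Schottky normal subgroup $\Gamma\lhd K$, $K/\Gamma\cong\mathbb{Z}_{2n}$, and quotient map $\pi:K\to\mathbb{Z}_{2n}$. The region of discontinuity $\Omega$ of $\Gamma$ is also the region of discontinuity of $K$, and $\Omega/\Gamma=S$ is a closed Riemann surface of genus $g$ carrying the group $H=K/\Gamma=\langle\tau\rangle$ of order $2n$ acting with $\tau$ anticonformal. By the lifting theorem (Theorem \ref{teo1}), there is a $H$-invariant fundamental collection of loops $\mathcal{F}$ on $S$ cutting $S$ into planar pieces, whose full preimage $P^{-1}(\mathcal{F})$ is a collection of pairwise disjoint simple loops in $\Omega$. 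The strategy is the standard one for this family of theorems: use $P^{-1}(\mathcal{F})$ as the system of "combination curves". The pieces of $\Omega$ cut along $P^{-1}(\mathcal{F})$, together with the stabilizers in $K$ of these pieces and of the curves, give the basic factors, and the combination theorem (Theorem \ref{KMC}) reassembles $K$ from them. Concretely, one analyzes the quotient orbifold structure: each complementary planar piece of $S\setminus\mathcal{F}$ and its (possibly orientation-reversing) stabilizer in $H$ lifts to a basic group, and the surface-group relations among the loops become the amalgamation/HNN data. Going through the list: a piece with trivial stabilizer and handles contributes loxodromic cyclic groups (T0) or the loxodromic-plus-elliptic abelian groups (T4) when an elliptic of $H$ fixes that handle axis; orientation-preserving elliptic stabilizers of order $r\mid n$ give (T2); an orientation-reversing stabilizer whose square is elliptic gives (T3) or (T5) (depending on whether it fixes or swaps the relevant fixed points) or, in the glide-reflection case, (T1) or (T6); a reflection stabilizer with $n$ odd gives (T7); and a planar piece on which an orientation-reversing involution acts as a reflection of a circle with a conformal group $F$ on the other side gives (T8). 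That at least one piece of type T1, T3, T5, T6, T7, T8 occurs is forced because $K$ contains orientation-reversing elements and $\Gamma$ (the orientation-preserving, torsion-free kernel) does not, so some generator of some basic factor must be orientation reversing, and only those six types supply orientation-reversing generators.

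The main obstacle I anticipate is the bookkeeping in part (1): organizing the $H$-action on the dual graph of $\mathcal{F}\subset S$ (a graph with an order-$2n$ group action, where $\tau$ may reverse edges) so that a consistent choice of lifts of the pieces and of conjugating elements produces \emph{exactly} the listed basic types and nothing extraneous, and so that the resulting iterated application of Theorem \ref{KMC}(1) (free products only — note the statement says "combination theorem (1)", so HNN pieces must be re-expressed, which is possible because $\Gamma$ is free so $K$ is virtually free and one can arrange the graph of groups to be a tree after passing to the right generating set) genuinely recovers $K$ on the nose rather than a finite-index or quotient group. The delicate sub-points are: (i) showing the stabilizer of each piece in $K$ is one of the basic groups — this requires classifying the possible actions of a finite cyclic (possibly with an order-two orientation-reversing part) group on a planar domain with the induced hyperbolic structure, which is the Section \ref{Sec:basico} list by design; (ii) verifying the precise-invariance hypotheses of Theorem \ref{KMC} for the lifted loops, which follows from property (iii) of the fundamental collection together with the disjointness of $P^{-1}(\mathcal{F})$; and (iii) the order count $|K/\Gamma|=2n$ matching $\mathbb{Z}_{2n}$, which is where the parity constraints distinguishing (T2)/(T3), (T4)/(T5), and the even-$n$ hypothesis in (T6) and odd-$n$ in (T7)/(T8) all enter — these are exactly the constraints encoded, on the abstract side, by conditions (a) and (b) of part (2), so part (1) and the "only if" half of part (2) are really the same computation viewed from the two sides.
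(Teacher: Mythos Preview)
Your overall strategy matches the paper's: for part (1) you lift a fundamental collection $\mathcal{F}$ via Theorem~\ref{teo1}, cut $\Omega$ along $P^{-1}(\mathcal{F})$ into structure regions, analyse the $K$-stabilizers of regions and loops, and reassemble $K$ from basic pieces using Theorem~\ref{KMC}; for part (2) you invoke Proposition~\ref{propo1} after building the surjection $\Phi$ by hand on the free-product generators. The paper does the same, only in the opposite order (structure decomposition first, then a very terse paragraph for part (2)), and your treatment of part (2) is in fact more explicit than the paper's.

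Two points of caution on part (1). First, the structure-region bookkeeping that makes the case analysis go through is carried in the paper by a sequence of short lemmas exploiting the \emph{minimality} of $\mathcal{F}$: that $K_R$ is cyclic; that if $K_R^{+}\neq\{I\}$ then either both fixed points of its generator lie in $R$ or neither does; and that two boundary loops of $R$ invariant under $K_R^{+}$ must be $K$-equivalent. These are what force the stabilizer-plus-pairing data at each region to land in the short list (T0)--(T8) and not something larger; your outline gestures at ``classifying the possible actions of a finite cyclic group on a planar domain'' but you will need these minimality arguments explicitly. Second, a few of your attributions are off: a region with trivial stabilizer never yields (T4) (that type needs a nontrivial elliptic stabilizer commuting with a loxodromic pairing element), and the planar pieces have no ``handles'' --- the loxodromic/glide-reflection generators come from pairing boundary loops, not from genus. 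Your worry about HNN versus free products is well placed, and the resolution is exactly the one the paper uses implicitly: the loxodromic or glide-reflection element pairing two boundary loops of a region, together with the (cyclic) stabilizer of that region, is itself one of the basic groups (T0), (T1), (T4), (T5), (T6) or a constituent of (T8); so what looks locally like an HNN step is absorbed into a single basic free-product factor, and the global decomposition of $K$ is a free product as required by the definition of ``general group of type $n$''.
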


The particular case $n=1$ was previously obtained in \cite{GH}.

\section{Proof of Theorem \ref{clasifica1}}\label{clase}
The cases $g=0,1$ correspond to the basic groups. So, we only need to take care of the case $g \geq 2$.
Let $K$ be a extended ${\mathbb Z}_{2n}$-extended Schottky group of rank $g \geq 2$ and $\Gamma \lhd K$ be a Schottky group of rank $g$ so that $H=K/\Gamma=\langle \tau \rangle \cong {\mathbb Z}_{2n}$. The Riemann surface $S=\Omega/\Gamma$ admits the anticonformal automorphism $\tau$ of order $2n$ which lifts to the region of discontinuity $\Omega$ as an extended M\"obius transformation $\widehat{\tau}$ which normalizes $\Gamma$, $\widehat{\tau}^{2n} \in \Gamma$  and $\widehat{\tau}^{j} \notin \Gamma$, for $j=1,...,2n-1$. Moreover, $K=\langle \widehat{\tau},\Gamma\rangle$. Let $\pi:S \to S/H$ be a (branched) Galois di-analytic covering with deck group $H$. We will follow the same ideas (with the corresponding modifications) as in \cite{GH} for the case $n=1$.

\subsection{Structural regions and loops}
Let us consider a Galois covering $P:\Omega \to S$ whose deck group is $\Gamma$.
As $H<Aut(S)$ lifts, with respect to the Schottky uniformization $(\Omega,\Gamma,P)$, to obtain $K$, then (by Theorem \ref{teo1}) there is a fundamental collection ${\mathcal F}$ associated to the pair  $\{(\Omega,\Gamma,P),H\}$. Let us denote by $\widetilde{\mathcal F}$ the collection of pairwise disjoint simple loops in $\Omega$ obtained by the lifting of the loops in ${\mathcal F}$ under $P$. Recall that such a collection of loops is minimal, that is, by deleting any subcollection from it, the resulting collection is not longer a fundamental collection for the above uniformization.
Each loop $\beta \in \widetilde{\mathcal F}$ is called a {\it structure loop} for $K$ and each of the connected components $R$ in $\Omega \setminus \widetilde{\mathcal F}$ is called a {\it structure region} for $K$. We denote by $K_{R}$ and by $K_{\beta}$ the corresponding $K$-stabilizers of $R$ and $\beta$.

\begin{lemm}\label{stabilizers}
\mbox{}
\begin{enumerate}
\item If $R$ is a structural region, then $K_{R}$ is either (i) trivial or (ii) a cyclic group generated by a power of $\tau$.

\item If $\beta$ is a structural loop, then $K_{\beta}$ is either:
\begin{enumerate}
\item trivial;
\item $\langle \tau^{n} \rangle$, where $\tau^{n}$ permutes both structure regions with common boundary; 
\item a cyclic group of order a divisor of $n$ (if preserves orientation) or $2n$ (if reverses the orientation), keeping invariant each one of the two structure regions with $\beta$ in the boundary.

\end{enumerate}
\end{enumerate}
\end{lemm}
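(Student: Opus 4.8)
The plan is to read the stabilizers $K_R$ and $K_\beta$ off of the short exact sequence $1\to\Gamma\to K\to\langle\tau\rangle\to 1$ (with $\langle\tau\rangle\cong\mathbb{Z}_{2n}$), once one knows that $\Gamma$ acts freely on the structure loops and on the structure regions. The covering‑theoretic set‑up is this: since $\widetilde{\mathcal F}=P^{-1}(\mathcal F)$, each structure region $R$ is a connected component of $P^{-1}(X)$, where $X$ is the component of $S\setminus\mathcal F$ with $P(R)=X$, and each structure loop $\beta$ is a connected component of $P^{-1}(\alpha)$ with $\alpha=P(\beta)\in\mathcal F$. As $P\colon\Omega\to S$ is a regular covering with deck group $\Gamma$ and $\Gamma$ permutes these components transitively, the restrictions $P|_R\colon R\to X$ and $P|_\beta\colon\beta\to\alpha$ are regular coverings with deck groups $\Gamma_R:=\Gamma\cap K_R\lhd K_R$ and $\Gamma_\beta:=\Gamma\cap K_\beta\lhd K_\beta$, and the projection $K\to\langle\tau\rangle$ induces isomorphisms $K_R/\Gamma_R\cong\mathrm{Stab}_{\langle\tau\rangle}(X)$ and $K_\beta/\Gamma_\beta\cong\mathrm{Stab}_{\langle\tau\rangle}(\alpha)$, both cyclic subgroups of $\langle\tau\rangle$.

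Next I would show $\Gamma_\beta=\Gamma_R=\{1\}$, which already settles part (1). For a loop, condition (iii) of Theorem~\ref{teo1} gives that $\beta$ is a simple closed curve, so $P|_\beta$ is a covering of $S^{1}$ by $S^{1}$; a nontrivial one has nontrivial finite cyclic deck group, which is impossible since $\Gamma_\beta\le\Gamma$ is torsion free, whence $\Gamma_\beta=\{1\}$. For a region, were $P|_R$ nontrivial then $\Gamma_R$ would contain a loxodromic transformation, and a standard argument with the minimality of $\mathcal F$ (exactly as in \cite{GH} for $n=1$) would exhibit a loop of $\mathcal F$ whose $H$‑orbit can be deleted while keeping (i)--(iii) valid, contradicting that $\mathcal F$ is a fundamental collection; hence $\Gamma_R=\{1\}$. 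Combined with the isomorphisms above, $K_R\cong\mathrm{Stab}_{\langle\tau\rangle}(X)$ and $K_\beta\cong\mathrm{Stab}_{\langle\tau\rangle}(\alpha)$ are finite cyclic; in particular $K_R$ is either trivial or a nontrivial cyclic group generated, through this identification, by a power of $\tau$, which is part (1).

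For part (2), since $\beta$ is a Jordan curve in $\widehat{\mathbb C}$ it separates $\Omega$, and a collar of $\beta$ on each side lies in one structure region, so there are exactly two structure regions $R_{1}\neq R_{2}$ with $\beta$ in their boundary, lying on opposite sides of $\beta$. The cyclic group $K_\beta$ permutes $\{R_{1},R_{2}\}$; let $K_\beta^{0}:=K_\beta\cap K_{R_1}\cap K_{R_2}$ be the (index $1$ or $2$) subgroup fixing each of them. If $K_\beta=K_\beta^{0}$, then $K_\beta$ preserves both $R_{1}$ and $R_{2}$; if, in addition, $K_\beta$ is orientation preserving, its image in $\langle\tau\rangle$ lies in $K^{+}/\Gamma=\langle\tau^{2}\rangle\cong\mathbb{Z}_n$, so $|K_\beta|$ divides $n$, and otherwise $|K_\beta|$ divides $2n$ --- this is case (c). If $K_\beta\neq K_\beta^{0}$, let $\sigma$ generate $K_\beta$; then $\sigma$ interchanges $R_{1}$ and $R_{2}$, so $\sigma K_{R_1}\sigma^{-1}=K_{R_2}$ and $\sigma^{2}\in K_\beta^{0}$, and it remains to prove $\sigma^{2}=1$, for then $K_\beta=\langle\sigma\rangle$ has order $2$ and, being the unique order‑$2$ subgroup of $\langle\tau\rangle$, is the group $\langle\tau^{n}\rangle$ of case (b). I expect the proof that $\sigma^{2}=1$ to be the main obstacle: a nontrivial $\sigma^{2}$ would be an orientation‑preserving elliptic transformation belonging to $K_{R_1}\cap K_{R_2}\cap K_\beta$, and by combining part (1) applied to $R_{1}$ and $R_{2}$, the conjugacy $\sigma K_{R_1}\sigma^{-1}=K_{R_2}$, and the minimality of $\mathcal F$ (which pins down the planar pieces $P(R_{1}),P(R_{2})$ and the quotient orbifold $S/H$), one should be forced to a contradiction --- either a nontrivial transformation fixing points of $\overline{R_{1}}$ and of $\overline{R_{2}}$ simultaneously, or a removable loop of $\mathcal F$. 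Once this is established, (b) follows and the proof is complete.
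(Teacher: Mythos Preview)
Your approach coincides with the paper's: both identify $K_R$ and $K_\beta$ with the $H$-stabilizers of $P(R)$ and $P(\beta)$ via the regular covering $P$, and then read the cyclic structure off from $H\cong\mathbb{Z}_{2n}$. The paper is terser on the key point: it simply asserts that $P|_R:R\to P(R)$ (and likewise $P|_\beta$) is a \emph{homeomorphism}, so that $\Gamma_R=\Gamma_\beta=\{1\}$ and the induced map $K_R\to\mathrm{Stab}_H(P(R))$ is an isomorphism; this is treated as part of the standing set-up coming from Theorem~\ref{teo1} and the Schottky uniformization, rather than being derived through minimality of $\mathcal F$ and a hypothetical loxodromic in $\Gamma_R$ as you propose. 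For part~(2), the paper also does not argue why a side-swapping generator of $K_\beta$ must have order two; it merely lists the three possibilities for the $H$-stabilizer of $L=P(\beta)$ and leaves it at that. So the ``main obstacle'' you flag is not addressed in the paper's own proof either, and your outline is already more detailed on that point than what the paper provides.
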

\begin{proof}
 If $A \subset S \setminus {\mathcal F}$ is any connected component and $R \subset \Omega \setminus \widetilde{\mathcal F}$ is any connected component such that $P(R)=A$, then $P:R \to A$ is a homeomorphism. This asserts that there is natural isomorphism (induced by $P$) between $K_{R}$ and the $H$-stabilizer of $A$. This is particular asserts that the $K_{R}$ is isomorphic to a subgroups of $H$, that is, either (i) trivial or (ii) a finite cyclic group generated by some power of $\tau$.

Similarly, if $P(\beta)=L \in {\mathcal F}$, then $P$ induces a natural isomorphism between $K_{\beta}$ and the $H$-stabilizer of $L$. In this way, $K_{\beta}$ is either (i) trivial or (ii) a cyclic group generated by an order two element permuting both structure regions around $\beta$ (in this case, $K_{\beta}$ must be generated by the unique involution in $H$)
or (iii) a cyclic group of order a divisor of $n$ (if preserves orientation) or $2n$ (if reverses the orientation), keeping invariant each one of the two structure regions with $\beta$ in the boundary.
\end{proof}

\begin{rema}
Note, in case (b) above, that the involution $\tau^{n}$ might be (i) an elliptic transformation of order two with its both fixed points on $\beta$ (this for $n$ even) or (ii) a reflection whose circle of fixed points is $\beta$ or (iii) an imaginary reflection (both case for $n$ odd).
\end{rema}

\begin{prop}\label{equivalente}
Let $R$ and $R'$ be any two different structure regions with a common boundary loop $\beta \in \widetilde{\mathcal F}$. Then, they are
$K$-equivalent if and only if either:
\begin{itemize}
\item[({i})] $K_{\beta}$ contains an element (necessarily of order two) which does not belong
to $K_{R}$ or
\item[({ii})] there is another boundary loop $\beta' \in \widetilde{\mathcal F}$ of $R$ and an element $k \in
K \setminus K_{R}$ such that $k(\beta')=\beta$ (necessarily a loxodromic or pseudo-hyperbolic transformation).
\end{itemize}
\end{prop}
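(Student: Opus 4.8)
The plan is to prove both implications by tracking how $K$ acts on the structure regions and structure loops, the only substantial inputs being the Jordan curve theorem in $\widehat{\mathbb C}$ and Lemma~\ref{stabilizers}. I would begin by recording the elementary geometry of a structure loop. Since $\widetilde{\mathcal F}$ is obtained by lifting the $H$-invariant collection $\mathcal F$ under $P$ (each $k\in K$ descends to an element of $H=K/\Gamma$ which it covers via $P$), the collection $\widetilde{\mathcal F}$ is $K$-invariant, so $K$ permutes the structure loops and the structure regions and preserves incidence. A structure loop $\beta$ is a Jordan curve in $\widehat{\mathbb C}$, so $\widehat{\mathbb C}\setminus\beta=D_{1}\sqcup D_{2}$ with $D_{1},D_{2}$ Jordan domains; cutting an annular neighbourhood of $\beta$ in $\Omega$ (taken disjoint from $\widetilde{\mathcal F}\setminus\{\beta\}$) along $\beta$ into two half-collars shows that exactly one structure region accumulates on $\beta$ from each side. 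Hence, as $R\neq R'$ both have $\beta$ in their boundary, these are precisely the two (necessarily distinct) structure regions adjacent to $\beta$, say $R\subset D_{1}$ and $R'\subset D_{2}$; moreover $\partial R\subset\overline{D_{1}}$, $\partial R'\subset\overline{D_{2}}$, and $\overline{D_{1}}\cap\overline{D_{2}}=\beta$, so $\beta$ is the only structure loop lying in the boundary of both $R$ and $R'$.

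The ``if'' direction is then immediate. If $h\in K_{\beta}\setminus K_{R}$, then $h(R)$ is a structure region adjacent to $h(\beta)=\beta$, hence $h(R)\in\{R,R'\}$, and $h\notin K_{R}$ forces $h(R)=R'$. In case~(ii), if $\beta'\in\widetilde{\mathcal F}$ lies in the boundary of $R$ and $k\in K\setminus K_{R}$ satisfies $k(\beta')=\beta$, then $k(R)$ is adjacent to $k(\beta')=\beta$, so again $k(R)\in\{R,R'\}$ and $k\notin K_{R}$ gives $k(R)=R'$. In either case $R$ and $R'$ are $K$-equivalent.

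For the converse, take $k\in K$ with $k(R)=R'$; then $k\notin K_{R}$, and since $\beta$ lies in the boundary of $R'=k(R)$ the loop $\beta':=k^{-1}(\beta)\in\widetilde{\mathcal F}$ lies in the boundary of $R$ and satisfies $k(\beta')=\beta$. If $\beta'=\beta$ this is case~(i); if $\beta'\neq\beta$ this is case~(ii). This already yields the equivalence, and it remains to check the two parenthetical refinements. In case~(i), an element of $K_{\beta}\setminus K_{R}$ does not preserve $R$, so Lemma~\ref{stabilizers}(2) places us in its subcase~(b): $K_{\beta}=\langle\tau^{n}\rangle$ and the element is $\tau^{n}$, which has order two. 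In case~(ii), for any $k$ occurring in~(ii) one has $k(R)=R'$ (by the ``if'' argument), and I would set $\beta_{i}:=k^{i}(\beta')$ and $R_{i}:=k^{i}(R)$, so that $\beta_{0}=\beta'$, $\beta_{1}=\beta$, $R_{0}=R$, $R_{1}=R'$, and then prove by induction on $i\geq 0$ that $\beta_{i+1}$ lies in the complementary domain $\Delta_{i}$ of $\beta_{i}$ on whose boundary $R_{i}$ accumulates, and that the complementary domain $\Delta_{i+1}$ of $\beta_{i+1}$ on whose boundary $R_{i+1}$ accumulates satisfies $\overline{\Delta_{i+1}}\subset\Delta_{i}$; the inclusion is proper because $k(\beta')=\beta\neq\beta'$ forces $\beta_{i}\neq\beta_{i+1}$. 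Consequently the $\beta_{i}$ are pairwise distinct, so $k$ has infinite order; since $\Gamma\lhd K$ is purely loxodromic of finite index, $K^{+}$ contains no parabolics, and therefore $k$ is neither parabolic nor pseudo-parabolic, i.e.\ $k$ is loxodromic or pseudo-hyperbolic (a glide-reflection).

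Most of this is routine bookkeeping with the Jordan curve theorem and Lemma~\ref{stabilizers}. The step I expect to demand the most care is the nesting induction in the last paragraph: identifying correctly, at each stage, which of the two complementary domains of $\beta_{i}$ plays the role of $\Delta_{i}$ (the one on whose boundary $R_{i}$ accumulates), and verifying that the chain $\overline{\Delta_{0}}\supsetneq\overline{\Delta_{1}}\supsetneq\cdots$ is strictly decreasing, so that $k$ genuinely has infinite order.
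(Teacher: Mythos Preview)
Your argument is correct and follows the only natural route here; the paper's own proof is a single sentence (``If two structure regions share a boundary structure loop, then they are $K$-equivalent if there is some element of $K$ sending a boundary loop of one to a boundary loop of the other''), so your core ``if/only if'' reasoning is exactly what the paper has in mind, just written out in full. Where you go further is in justifying the parenthetical claims: the paper does not argue that the element in case~(i) has order two nor that the element in case~(ii) is loxodromic or pseudo-hyperbolic, whereas you extract the first from Lemma~\ref{stabilizers}(2)(b) and the second from a clean nesting argument (the key observation being that $k(\Delta_{0})=\Delta_{1}$ and $\overline{\Delta_{1}}\subset\Delta_{0}$, so iterating $k$ gives a strictly nested chain). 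That extra work is sound; the nesting induction you flag as the delicate point goes through once one notes that $k$, being a (extended) M\"obius transformation, is a global homeomorphism of $\widehat{\mathbb C}$, so $k(\Delta_{i})=\Delta_{i+1}$ for all $i$ and the base case propagates.
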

\begin{proof}
If two structure regions share a boundary structure loop, then they are $K$-equivalent if there is some element of $K$ sending a boundary loop of one to a boundary loop of the other. 
\end{proof}

\begin{prop}\label{trivial}
Let $R$ be a structure region with $K_{R}$ either trivial or a cyclic group generated by a reflection.
If $\beta$ a boundary loop of $R$ such that $K_{R} \cap K_{\beta}=\{I\}$, then there is
a non-trivial element $k \in K \setminus K_{R}$ so that $k(\beta)$ still a boundary loop of $R$.
\end{prop}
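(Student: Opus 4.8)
The plan is to argue by contradiction: suppose the conclusion fails, i.e. for every non-trivial $k \in K \setminus K_{R}$ the loop $k(\beta)$ is *not* a boundary loop of $R$, and also (by hypothesis) $K_{R} \cap K_{\beta} = \{I\}$. First I would translate the failure of the conclusion, together with Proposition \ref{equivalente}, into a statement about the quotient: since $K_{R}$ is either trivial or generated by a reflection, it contains no element of order two permuting the two regions across $\beta$, so condition (i) of Proposition \ref{equivalente} cannot hold at $\beta$ with an element of $K_R$; and the negation of the present conclusion rules out condition (ii). Hence the structure region $R'$ on the other side of $\beta$ is *not* $K$-equivalent to $R$. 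The key point is that this says the loop $P(\beta) \in {\mathcal F}$ has two distinct sides downstairs in $S/H$ as well — or more precisely, that we can delete a sub-collection of $\widetilde{\mathcal F}$ (namely the $K$-orbit of $\beta$) without losing the defining properties of a fundamental collection, contradicting minimality.

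The heart of the argument is to check that deleting the $K$-orbit of $\beta$ from $\widetilde{\mathcal F}$ (equivalently, deleting the $H$-orbit of $P(\beta)$ from ${\mathcal F}$) preserves properties (i), (ii), (iii) of Theorem \ref{teo1}. Property (ii), $H$-invariance, is automatic since we delete a whole $H$-orbit. Property (iii), that $P^{-1}$ of each remaining loop is still a disjoint union of simple loops, is also automatic since removing loops from the collection only shrinks it. The content is property (i): that each component of $S \setminus ({\mathcal F} \setminus H\cdot P(\beta))$ is still planar. Merging $R$ and $R'$ across $\beta$ and then taking the $K$-quotient, the new component downstairs is obtained from the old component(s) by gluing along the image of $\beta$; because $R'$ is not $K$-equivalent to $R$, the two sides of $\beta$ in $S$ lie in genuinely different pieces, so gluing two planar pieces along a single simple loop yields again a planar surface (Euler characteristic / genus is additive here, no handle is created). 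This is where I would need to be careful: I must verify that $R'$ has no *other* boundary loop in the $K$-orbit of $\beta$ — but that too follows from the negation of the conclusion applied with suitable $k$ — so that the gluing is along exactly one loop and cannot create a handle.

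I expect the main obstacle to be the bookkeeping in the planarity step: ruling out the scenario where, although $R' \not\sim_K R$, some *third* structure region further along is $K$-equivalent to $R$ in a way that, after passing to the quotient, the removal of the orbit of $P(\beta)$ does create a non-planar component. Handling this cleanly requires working out exactly which boundary loops of $R'$ can be identified with $\beta$ under $K$ and using again the hypothesis $K_R \cap K_\beta = \{I\}$ together with the structure of $K_\beta$ from Lemma \ref{stabilizers}. Once that case analysis is in place, the contradiction with the minimality of $\widetilde{\mathcal F}$ (equivalently, of ${\mathcal F}$ as a fundamental collection for $\{(\Omega,\Gamma,P),H\}$) closes the argument, so some non-trivial $k \in K \setminus K_R$ with $k(\beta)$ a boundary loop of $R$ must exist after all.
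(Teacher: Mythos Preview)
Your proposal is correct and follows essentially the paper's strategy: contradict the minimality of $\mathcal{F}$ by deleting the $H$-orbit of $P(\beta)$. The paper is slightly more direct in that it first observes that if $K_\beta$ contains any element $k\notin K_R$ then $k(\beta)=\beta$ already witnesses the conclusion, so (using $K_R\cap K_\beta=\{I\}$) one reduces immediately to $K_\beta=\{I\}$ without invoking Proposition~\ref{equivalente}; the minimality step is then dispatched in one line by noting that $\beta$ is freely homotopic in $R$ to the product of the remaining boundary loops, leaving the planarity bookkeeping you (rightly) flag as implicit.
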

\begin{proof}
If $K_{\beta}$ contains an element outside $K_{R}$, then we are done.
Assume $K_{\beta}$ is trivial. In this case, $P(\beta) \in {\mathcal F}$ is a simple loop with trivial $H$-stabilizer. 
We have that $\beta$ is free homotopic to the product of the
other boundary loops of $R$. If none of the other boundary loops of $R$ is equivalent to $\beta$ under
$K$, then we may delete $P(\beta)$ and its $H$-translates from ${\mathcal F}$,  contradicting the
minimality of ${\mathcal F}$.
\end{proof}

If $R$ is a structural region, then $K^{+}_{R}$ is either trivial or a cyclic group generated by some elliptic transformation.

\begin{prop}\label{prop413}
 Let $R$ be a structure region with $K_{R}^{+}$ being non-trivial.  If the generator of the cyclic group $K_{R}^{+}$ has one of its fixed points 
in $R$, then the other fixed point also lie in $R$.
\end{prop}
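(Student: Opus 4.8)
The plan is to argue by contradiction using the discreteness of $K$ together with the structure of $K_R^+$ provided by Lemma \ref{stabilizers} and the preceding discussion. Suppose the generator $e$ of the cyclic group $K_R^+$ is an elliptic transformation of finite order $r$ (a divisor of $n$, by Lemma \ref{stabilizers}), with one fixed point $p \in R$ and the other fixed point $q \notin R$. Since $e$ fixes $R$ setwise, $e$ must permute the boundary loops of $R$; and since $R$ is an open connected planar region whose closure separates $p$ from $q$ (because $p$ lies in $R$ but $q$ does not), there is at least one boundary loop $\beta$ of $R$ such that $q$ lies in the component of $\widehat{\mathbb C}\setminus\beta$ not meeting $R$. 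The orbit of $\beta$ under $\langle e\rangle$ consists of boundary loops of $R$, and each of them, being invariant under no nontrivial rotation about $p$ other than possibly those in $\langle e\rangle$, is genuinely moved around by $e$.

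The first step is to pin down the local picture near $q$. Since $q\notin R$ but $q$ is fixed by $e\in K_R^+$, the point $q$ lies in the closed region $\widehat{\mathbb C}\setminus R$ that is "enclosed" by one of the boundary loops of $R$, and the $\langle e\rangle$-orbit of that enclosing loop, together with the enclosed discs, is permuted by $e$. I would then show that this forces $e$ to fix the disc $D$ bounded by that boundary loop $\beta$ and containing $q$: indeed $e(q)=q$ forces $e$ to send the enclosing loop $\beta$ to the unique enclosing loop of $R$ whose disc contains $q$, which is $\beta$ itself. Hence $e\in K_\beta$. But now $e$ is an orientation-preserving element of $K_\beta$ of order $r>1$ fixing the two structure regions adjacent to $\beta$; by Lemma \ref{stabilizers}(2)(c) this is allowed, so no immediate contradiction — one must look at the two fixed points.

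The heart of the argument is then the following dichotomy applied to $\beta$. The loop $\beta$ bounds a disc $D$ with $q\in D$ and $D\cap R=\emptyset$. Let $R'$ be the structure region on the far side of $\beta$ (the one separated from $R$ by $\beta$), so $R'\subset D$. Since $e\in K_\beta$ and $e$ preserves orientation, $e$ also preserves each side of $\beta$, hence $e(R')=R'$, so $e\in K_{R'}^+$. Now $e$ has both fixed points $p$ and $q$ with $p\notin \overline{R'}$ (as $p\in R$ and $R,R'$ are disjoint with common boundary only $\beta$) and $q\in$ the side containing $R'$. Running the same reasoning with the roles of $R$ and $R'$ interchanged produces a boundary loop $\beta'$ of $R'$ bounding a disc $D'\ni p$ with $D'\cap R'=\emptyset$; but $R\subset D'$ forces $\beta'=\beta$, whence $p\in D'=$ the disc bounded by $\beta$ on the side of $R$, contradicting $p\in R$ and $R\cap D'=\emptyset$. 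The main obstacle I anticipate is making the separation/enclosing claims rigorous: precisely, that a finite-order elliptic with one fixed point in the open planar region $R$ and the other outside must send the boundary loop enclosing the outside fixed point to itself, which uses that $R\setminus\{p\}$ is connected and that $e$ acts on the finite set of boundary loops of $R$ — I would spell this out using the planarity of $R$ and the fact that an elliptic rotation about $p$ cannot swap two boundary loops that lie "around" $p$ without violating that $q$ is fixed. Once the enclosing loop is shown $e$-invariant, the symmetric argument above closes the proof.
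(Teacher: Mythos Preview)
Your argument has a genuine gap at the concluding step. You claim that after identifying $\beta'=\beta$ you obtain $p\in D'$ together with $R\cap D'=\emptyset$, and call this a contradiction. But by your own construction $D'$ is the disc bounded by $\beta'=\beta$ on the side opposite to $R'$, i.e.\ on the side of $R$; hence $R\subset D'$ and in particular $R\cap D'\neq\emptyset$. There is simply no contradiction here: the configuration ``$e$ fixes $R$, $e$ fixes the adjacent region $R'$ across an $e$-invariant loop $\beta$, with $p\in R$ and $q$ on the $R'$-side of $\beta$'' is perfectly consistent with the topology of structure regions and loops. Iterating your argument into $R'$ only produces either $\beta'=\beta$ (if $q\in R'$) or a further nested $e$-invariant boundary loop $\beta''$ of $R'$ lying inside $D$ (if $q\notin R'$), and neither alternative is self-contradictory.

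The missing ingredient is the \emph{minimality} of the fundamental collection $\mathcal F$, which your approach never invokes. The paper argues differently: assuming only one fixed point of $h$ lies in $R$, one observes that $\beta$ is the \emph{unique} boundary loop of $R$ stabilized by $K_R^+$, while every other boundary loop of $R$ has trivial $K_R^+$-stabilizer. In the case $K_R=K_R^+$, filling in the discs bounded by those other boundary loops makes $\beta$ contractible; hence $P(\beta)$ and its $H$-translates can be deleted from $\mathcal F$ while preserving the conditions of Theorem~\ref{teo1}, contradicting minimality of $\mathcal F$. The remaining case $K_R\neq K_R^+$ is handled by a short direct argument with the pseudo-elliptic generator $t$ satisfying $t^2=h$. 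Some appeal to minimality (or an equivalent hypothesis) is unavoidable: for a non-minimal collection of loops the configuration you are trying to exclude can actually occur, so a purely topological argument of the kind you outline cannot succeed.
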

\begin{proof}
Let $K^{+}_{R}=\langle h \rangle$ and suppose only one fixed point of $h$ belongs to $R$.
Then there is a unique structure loop $\beta$ on the boundary of $R$ stabilized by $h$. Every other structure
loop, on the boundary of $R$, has $K^{+}_{R}$-stabilizer the identity.

If $K_{R}=K_{R}^{+}$, then it follows that if were to fill in the discs bounded by the other structure loops on
the boundary of $R$, then $\beta$ would be contractible; that is, if we delete $P(\beta)$ and their
$H$-translates from ${\mathcal F}$, then we will still have a system of loops which is $H$-invariant and cut-off $S$ into genus zero surfaces, a contradiction to the minimality of  our collection ${\mathcal F}$.

If $K_{R} \neq K_{R}^{+}$, then there is some pseudo-elliptic $t \in K$ such that $K_{R}=\langle t \rangle$ and $t^{2}=h$ (in particular, $n$ must be even). If $t$ permutes both fixed points of $h$, we obtain that both fixed points of $h$ must belong to $R$. If $t$ does not permutes them, then (as $t$ also fixes the fixed points  of $h$) $t$ has to be a reflection, from which $h=t^{2}=I$, a contradiction.  
\end{proof}

The previous result asserts that a non-trivial elliptic transformation in $K_{R}^{+}$ either has both fixed points on the structure region $R$ or none of them belong to it. In the last case, there are (exactly) two boundary structure loops of $R$, each one invariant under such an elliptic transformation.

\begin{prop}\label{prop414}
Let $R$ be a structure region with non-trivial $K_{R}^{+}$.
If $\beta_{1}, \beta_{2}$ are two different boundary loops of $R$ which are invariant
under $K_{R}^{+}$, then there is a (non-trivial) element $k \in K$ so that $k(\beta_{1})=\beta_{2}$ (such an element is either loxodromic or pseudo-hyperbolic).
\end{prop}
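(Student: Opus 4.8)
The plan is to distinguish two cases according to whether $K_{R}$ preserves or reverses orientation. First I would fix the geometric picture. Since $K_{R}^{+}$ is non-trivial, write $K_{R}^{+}=\langle h\rangle$ with $h$ elliptic of order $m$ dividing $n$. By Proposition \ref{prop413} and the remark following it, $\beta_{1}$ and $\beta_{2}$ are exactly the boundary loops of $R$ invariant under $h$, and neither fixed point of $h$ lies in $R$; writing $D_{i}$ for the disc bounded by $\beta_{i}$ that is disjoint from $R$, one fixed point $p_{1}$ of $h$ lies in $D_{1}$ and the other, $p_{2}$, in $D_{2}$ --- they cannot both lie in a single $D_{i}$, since then $h$ would act freely on the disc $D_{3-i}$, which is impossible for a non-trivial finite cyclic group --- while $h$ permutes freely the remaining boundary loops of $R$.

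\textbf{Case $K_{R}\neq K_{R}^{+}$.} Here $K_{R}=\langle t\rangle$ with $t$ orientation-reversing and $t^{2}=h$, so $t$ is pseudo-elliptic. Since $t$ commutes with $h=t^{2}$ and fixes $R$, the loop $t(\beta_{1})$ is an $h$-invariant boundary loop of $R$, whence $t(\beta_{1})\in\{\beta_{1},\beta_{2}\}$. If $t(\beta_{1})=\beta_{1}$, then, as $t(R)=R$ lies on the side of $\beta_{1}$ opposite to $D_{1}$, $t$ must preserve $D_{1}$ and fix $p_{1}\in D_{1}$; but an orientation-reversing extended M\"obius transformation that fixes a disc and an interior point is an involution, so $h=t^{2}$ would be the identity, a contradiction. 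Hence $t(\beta_{1})=\beta_{2}$, and $k=t$ works.

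\textbf{Case $K_{R}=K_{R}^{+}=\langle h\rangle$.} Assume, for contradiction, that no element of $K$ carries $\beta_{1}$ to $\beta_{2}$. Then $L_{1}:=P(\beta_{1})$ and $L_{2}:=P(\beta_{2})$ lie in distinct $H$-orbits in $\mathcal{F}$: if $\tau^{j}(L_{1})=L_{2}$, then lifting $\tau^{j}$ to an element of $K$ and composing with a suitable element of $\Gamma$ would produce an element of $K$ taking $\beta_{1}$ to $\beta_{2}$. Set $\mathcal{F}'=\mathcal{F}\setminus(H\cdot L_{1})$. It is $H$-invariant, and each of its loops still pulls back under $P$ to pairwise disjoint simple loops in $\Omega$, so conditions (ii) and (iii) of Theorem \ref{teo1} hold; it remains to verify (i), that every component of $S\setminus\mathcal{F}'$ is planar. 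Passing to $\Omega$, deleting $L_{1}$ and its $H$-translates removes the entire $K$-orbit of $\beta_{1}$ from $\widetilde{\mathcal{F}}$, merging each structure region having such a loop in its boundary with the structure region on the other side of that loop. Because every removed loop separates $\Omega\subset\widehat{\mathbb C}$ into its two sides, a positive-genus piece can arise on $S$ only if some removed loop has both of its local sides inside a single component of $S\setminus\mathcal{F}$, that is, only if a $K$-translate of $\beta_{1}$ is $\Gamma$-equivalent to another boundary loop of the same structure region $R$; a careful count of the boundary loops of $R$ under the action of $\langle h\rangle$, together with the assumed non-$K$-equivalence of $\beta_{1}$ and $\beta_{2}$, rules this out. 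Hence $\mathcal{F}'$ still satisfies (i)--(iii), contradicting the minimality of $\mathcal{F}$; so some $k\in K$ does take $\beta_{1}$ to $\beta_{2}$.

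Finally, $k$ is non-trivial because $\beta_{1}\neq\beta_{2}$. In the orientation-preserving case the element $k$ just obtained lies outside $K_{R}$ (every power of $h$ fixes $\beta_{1}$) and sends the boundary loop $\beta_{1}$ of $R$ to the boundary loop $\beta_{2}$, so it is the element occurring in alternative (ii) of Proposition \ref{equivalente} (with $\beta_{2}$ as the shared loop and $\beta_{1}$ as the other loop) and hence is loxodromic or pseudo-hyperbolic; concretely, $k$ is neither parabolic nor pseudo-parabolic since $K$ has a Cantor limit set, it is not a reflection (its fixed circle would lie in that limit set), and a finite-order $k$ (elliptic, pseudo-elliptic, or imaginary reflection) is excluded by the placement of $p_{1},p_{2}$ inside $D_{1},D_{2}$ and by Lemma \ref{stabilizers}. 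I expect the bookkeeping in the previous paragraph --- showing that removing the $H$-orbit of $L_{1}$ creates no handle --- to be the main obstacle, being the step where one must reproduce, with the finite symmetry $\langle h\rangle$ now present, the combinatorial planarity analysis carried out for $n=1$ in \cite{GH}.
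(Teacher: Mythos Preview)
Your strategy---assume no such $k$ exists and delete a loop from $\mathcal F$ to contradict its minimality---is the paper's strategy too, and the obstacle you single out at the end is real. The paper resolves it not by harder bookkeeping but by deleting a different loop. Instead of removing $P(\beta_1)$, remove $P(\beta_2)$ and look at the structure region $R_{*}$ on the far side of $\beta_2$. Since $h$ fixes $\beta_2$ it also fixes $R_{*}$, and Proposition~\ref{prop413} applied to $R_{*}$ produces a second $h$-invariant boundary loop $\beta_3$ of $R_{*}$. Every boundary loop of $R\cup R_{*}$ other than $\beta_1,\beta_2,\beta_3$ has trivial $K$-stabilizer and so is not $K$-equivalent to $\beta_2$; and under the contradiction hypothesis $\beta_2$ is $K$-inequivalent to both $\beta_1$ and $\beta_3$. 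Hence $R\cup\beta_2\cup R_{*}$ projects homeomorphically to $S$, the merged piece is visibly planar, and minimality of $\mathcal F$ is violated. The point is that by choosing the \emph{middle} loop $\beta_2$ you control both adjacent regions. With your choice of $\beta_1$ you have no hypothesis on the region across it, so nothing prevents $\beta_1$ from being $K$-equivalent to the next $h$-invariant loop further out, and your planarity claim does not close.

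Your Case~1 is correct but unnecessary: if $K_R\neq K_R^{+}$ then, as you show, the generator $t$ already carries $\beta_1$ to $\beta_2$, so the contradiction hypothesis is vacuous there and the paper's argument can be run uniformly without a case split. (Incidentally, the element $t$ you produce is pseudo-elliptic, so your Case~1 actually shows that the parenthetical ``loxodromic or pseudo-hyperbolic'' in the statement is not literally accurate in that situation; the paper's own proof does not verify that clause either.)
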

\begin{proof}
Let us assume that there is no such element of $K$ as desired and let $R_{*}$ be the other structure region sharing $\beta_{2}$ in its boundary. Proposition \ref{prop413} asserts that on the region $R_{*}$ there is another boundary loop $\beta_{3}$ which is also invariant under $K_{R}^{+}$. All other boundary loops of $R \cup R_{*}$ (with the exception of $\beta_{1}$, $\beta_{2}$ and $\beta_{3}$) have trivial $K$-stabilizers. In particular, they are not $K$-equivalents to $\beta_{1}$, $\beta_{2}$ and $\beta_{3}$. Also, $\beta_{2}$ is neither $K$-equivalent to $\beta_{1}$ and $\beta_{3}$ (by our assumption). If we project the region $R \cup R_{*} \cup \beta_{2}$ on $S$, we obtain an homeomorphic copy and the projected loop from $\beta_{2}$ is not $H$-equivalent to none of its boundary loops. In particular, we may delete it (and its $H$-translates) obtaining a contradiction to the minimality of ${\mathcal F}$.
\end{proof}

\subsection{Structure regions with trivial stabilizers}
Let $R$ be a structure region with trivial stabilizer
$K_{R}=\{I\}$. As consequence of Proposition \ref{trivial}, every other structure region is necessarily $K$-equivalent
to $R$. It follows that $R$ is a fundamental domain for $K$ and the boundary loops are paired by
either reflections, imaginary reflections, loxodromic transformations or pseudo-hyperbolics. In this case we obtain that $K$ is the free product, by the Klein-Maskit combination theorem, of groups of types (T0), (T1), (T2) (generated by an elliptic transformation of order two), (T3) (generated by imaginary reflections) and (T7). We are done in this case.

\subsection{Structure regions with non-trivial stabilizers}
Let us now assume there is no structure region with trivial $K$-stabilizer.
Proposition \ref{equivalente}, and the fact that $S/H$ is compact and connected, permits us to construct a
connected set $\widehat{R}$ obtained as the union of a finite collection of $K$-non-equivalent structure regions
(each of them has non-trivial $K$-stabilizer) together their boundary structure loops.

\smallskip
\noindent
{\it A suitable modification of $\widehat{R}$.}
We proceed to modify $\widehat{R}$ as follows. Let $\beta$ be a structure loop in the boundary of $\widehat{R}$ and $R \subset \widehat{R}$ be the structure region with $\beta$ in its border.
Assume there is a reflection $t \in K_{R}$ keeping invariant $\beta$ (so its circle of fixed points intersects $\beta$ at two points). This situation only may happen for $n$ odd and $t=\tau^{n}$. We should also note that $K_{\beta}=\langle t\rangle$.

We know that
there is some $k \in K$ such that $k(\beta)=\beta'$ is a boundary loop of $\widehat{R}$. If $\beta' \neq \beta$ (so $k$ is either loxodromic or pseudo-hyperbolic), let
$R' \subset \widehat{R}$ be the structure region containing $\beta'$ in its border. The reflection $t'=k t k^{-1}$ belongs to $K_{R'}$ and keeps invariant $\beta'$. In the case that $t' \neq t$, we eliminate $R'$ from $\widehat{R}$ and we add to it the structure region $k^{-1}(R')$. Under this type of process, we may now assume that $t'=t$.

\smallskip
The above permits us to assume that our set $\widehat{R}$ has the following extra property: {\it if $\beta$ and $\beta'$ are boundary loops of $\widehat{R}$ such that ({i}) there is some $k \in K$ with $k(\beta)=\beta'$ and ({ii}) there is a reflection $t=\tau^{n} \in K_{R}$ keeping invariant $\beta$, then $t$ also keeps invariant $\beta'$.}

\smallskip
\noindent
{\it Step 1: Internal structural loops and amalgamated free products.}
Note that if $\beta$ is a structure loop contained in the interior of $\widehat{R}$, then there are two different structure regions $R_{1}, R_{2} \subset \widehat{R}$ with $\beta$ as their common boundary. In this case, as $R_{1}$ and $R_{2}$ are non-$K$-equivalent, $K_{\beta}=K_{R_{1}} \cap K_{R_{2}}$. By Propositions \ref{prop413} and \ref{prop414} $K_{\beta}^{+}$ is trivial. So, either $K_{\beta}$ is trivial or generated by a reflection with exactly two fixed points on $\beta$.
We now perform the amalgamated free product of $K_{R_{1}}$ and $K_{R_{2}}$ with amalgamation at $K_{R_{1}} \cap K_{R_{2}}$. In the case $K_{\beta}$ is trivial, we are obtaining free product of groups of the types (T1)--(T7) and,  in the other case, we are constructing parts of the basic groups of type (T8).

\smallskip
\noindent
{\it Step 2: Boundary structural loops and HNN-extensions.}
Next, let $\beta$ be a structure loops in the boundary of $\widehat{R}$ and let $R\subset \widehat{R}$ be the structure region with $\beta$ in its boundary.

If $K_{\beta} \cap K_{R}$ is trivial, then Proposition \ref{trivial} asserts the existence of another structure boundary loop $\beta'$ (in the boundary of $\widehat{R})$ and some $k \in K$
such that $k(\beta)=\beta'$. If $\beta' \neq \beta$, then $k$ is either loxodromic or a pseudo-hyperbolic and if $\beta'=\beta$, then $k$ is either a reflection, an imaginary reflection or an elliptic involution. In this case we obtain (by HNN extensions in the sense of the Klein-Maskit combination theorem) groups of types (T0)--(T7) as described in the theorem.

Let us now assume $K_{\beta} \cap K_{R}$ is non-trivial.

\smallskip
\noindent
(A) Assume $K_{R}=\langle t=\tau^{n} \rangle$, where $t$ is a reflection with  circle of fixed points $C_{t}$. As $K_{\beta} \cap K_{R}$ is non-trivial,
$C_{t}$ intersects $\beta$ (at exactly two points), so either:
\begin{itemize}
\item[({i})] there is an involution $k \in K$ (conformal or anticonformal) with $k(\beta)=\beta$ and
$k(\widehat{R}) \cap \widehat{R}=\beta$; or
\item[({ii})] there is another boundary loop
$\beta'$ of $\widehat{R}$ and an element $\kappa \in K$ (which is either loxodromic or a pseudo-hyperbolic) so that $\kappa(\beta)=\beta'$ and
$\kappa(\widehat{R}) \cap \widehat{R}=\beta'$. We may assume $\kappa$ to be loxodromic. In fact, if $\kappa$ is a pseudo-hyperbolic, then $\tau^{n}\kappa$ is loxodromic with the same property.
\end{itemize}

In both cases above, we may perform the HNN-extension (in the sense of Klein-Maskit combination theorem) to produce factors of type (T8).

\smallskip
\noindent
(B) Assume $K_{R}=K_{R}^{+}\langle \phi \rangle$, where $\phi=\tau^{2l}$ is an elliptic transformation. In this case we have two possibilities: either ({i}) both fixed
points of $\phi$ belong to $R$ or ({ii}) there are two boundary loops $\beta_{1}$ and $\beta_{2}$ of $R$, each one invariant under $\phi$, and there is some $k \in K$ with $k(\beta_{1})=\beta_{2}$, $k(\widehat{R}) \cap \widehat{R}=\beta_{2}$. As $K_{\beta} \cap K_{R}$ is non-trivial, necessarily
$\beta=\beta_{1}$ (or $\beta_{2}$), then (again performing HNN-extension by the Klein-Maskit combination theorem) we obtain a group of type (T8).

\smallskip
Summarizing all the above is the following, which provides the first part of the theorem.

\begin{prop}\label{clasifica0}
The extended ${\mathbb Z}_{n}$-Schottky group $K$ is a general group of type $n$, where at least one of the basic groups of type (T1), (T3), (T5), (T6), (T7) or (T8) is used in its construction.\end{prop}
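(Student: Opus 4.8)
The plan is to read the statement off the case analysis of the two preceding subsections, checking that the blocks produced there are precisely basic groups of type $n$ assembled by Klein--Maskit's combination theorems (Theorem \ref{KMC}), and then to add one short argument for the final clause. I would split according to whether some structure region has trivial $K$-stabilizer. If there is a structure region $R$ with $K_{R}=\{I\}$, then Proposition \ref{trivial} forces every structure region to be $K$-equivalent to $R$; hence $R$ is a fundamental domain for $K$ and its boundary loops are identified in $K$-pairs by, according to Lemma \ref{stabilizers}, loxodromic transformations, glide-reflections, order-two elliptics, imaginary reflections, or reflections. Thus $K$ is the free product (Theorem \ref{KMC}(1)) of the cyclic (extended) groups they generate, each a basic group of type (T0), (T1), (T2), (T3) or (T7), so $K$ is a general group of type $n$; moreover $\widehat{\tau}$ reverses orientation, so at least one identifying element does, and one of (T1), (T3), (T7) is used.

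In the case where no structure region has trivial $K$-stabilizer, I would use the connected set $\widehat{R}$ assembled from a finite family of pairwise $K$-non-equivalent structure regions, after the modification that makes reflections act coherently on the paired boundary loops. Step 1 glues region stabilizers along interior structure loops by amalgamated free products --- by Propositions \ref{prop413} and \ref{prop414} the amalgamating subgroup along each interior loop is trivial or a reflection group --- producing free products of basic groups of types (T1)--(T7) or the interior part of a group of type (T8). Step 2 treats each boundary loop $\beta$ of $\widehat{R}$ by a Klein--Maskit HNN-extension: when $K_{\beta}\cap K_{R}$ is trivial one recovers the loxodromic/glide-reflection/reflection/imaginary-reflection/elliptic-involution identifications producing blocks of types (T0)--(T7), and when $K_{\beta}\cap K_{R}$ is non-trivial (cases (A) and (B)) one completes a block of type (T8). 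Combining the blocks by Theorem \ref{KMC}(1) exhibits $K$ as a general group of type $n$. For the final clause, argue by contradiction: basic groups of types (T0), (T2), (T4) lie inside $\mathbb{M}$, so if only these were used then $K<\mathbb{M}$, contradicting that $K$ is an extended Kleinian group containing $\widehat{\tau}$; hence at least one of (T1), (T3), (T5), (T6), (T7), (T8) occurs.

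The hard part is not the bookkeeping above but the verifications underpinning it (already carried out in the preceding subsections): that in each configuration (A)(i)--(ii) and (B)(i)--(ii) the amalgamation or HNN data reproduces exactly the defining relations of a basic group of type (T8), and, more fundamentally, that the modified $\widehat{R}$ together with its boundary identifications satisfies the precisely-invariant-disc hypotheses of Theorem \ref{KMC}, so that the iterated free products and HNN-extensions are valid and the resulting group is discrete with Cantor limit set. Checking these matchings, and that the modification of $\widehat{R}$ preserves the combination-theorem hypotheses, is where the real care lies.
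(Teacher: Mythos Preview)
Your proposal is correct and follows essentially the same approach as the paper: Proposition \ref{clasifica0} is stated there as a summary (``Summarizing all the above is the following\ldots''), and the content is exactly the two-case analysis (trivial versus non-trivial region stabilizers) carried out in the preceding subsections, using Propositions \ref{trivial}, \ref{equivalente}, \ref{prop413}, \ref{prop414} and the Klein--Maskit combination theorems in the way you describe. Your explicit contradiction argument for the final clause (that (T0), (T2), (T4) alone would force $K<\mathbb{M}$) is a clean addition; the paper leaves this implicit.
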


\subsection{Proof of Part (2)}
Condition (1) in Proposition \ref{propo1} is equivalent to say that in the construction of $K$ we need to use  at least one of the  basic groups of types (T1), (T3), (T5), (T6), (T7) or (T8).

Once we have condition (1) full-filled, condition (2), of the same proposition, is trivial if we have used in the construction basic groups of types  (T0), (T1) or (T6) (for $n$ even). Now, if we don't use any of these three types of groups, then in order for to have condition (2) full-filled, we need  that the maximum common divisor of all values of the form $2n/r$, were $r$ runs over all orders of elliptic and pseudo-elliptic transformations used in the basic groups, is $1$.

\medskip

All the above complete the proof of Theorem \ref{clasifica1}.

\section{Example: Extended ${\mathbb Z}_{4}$-Schottky groups}\label{cason=2}
As already observed, the case $n=1$ was described in \cite{GH}.
In this section, as an example, we writte down Theorem \ref{clasifica1} for the 
next case $n=2$.

\begin{coro}\label{clasifica2}
\mbox{}
\begin{itemize}
\item[I.-] Every extended ${\mathbb Z}_{4}$-Schottky groups of rank $g$ is a general group of type $2$, where the following basic groups (of type $2$) are used.

\begin{itemize} 
\item[(T1).-] cyclic group generated by a glide-reflection transformation; 

\item[(T2).-] cyclic group generated by an elliptic transformation of order $2$;

\item[(T3).-] cyclic group generated by a pseudo-elliptic transformation of order $4$;

\item[(T4).-] Abelian group generated by a loxodromic transformation and an elliptic transformation of order $2$;

\item[(T5).-] a group generated by a loxodromic transformation $A$ and a pseudo-elliptic transformation $B$ of order $4$ so that $B^{-1}ABA=I$;

\item[(T6).-] a group generated by a glide-reflection transformation $A$ and an elliptic transformation $B$ of order $2$ so that $BA=AB=I$.

\end{itemize}

\item[II.-] A general group $K$ of type $2$, constructed using $``a_{j}"$ groups of type (Tj) (where $j=1,...,6$),
is a extended ${\mathbb Z}_{4}$-Schottky group of rank $g$ if and only if the following two conditions are satisfied:
\begin{itemize}
\item[1.-]  $a_{1}+a_{3}+a_{5}+a_{6}>0$; and

\item[2.-] $g=4a_{1}+2a_{2}+3a_{3}+4a_{4}+4a_{5}+4a_{6} -3. $
\end{itemize}

In this case, we say that $(a_1, a_2,a_3,a_4,a_5,a_6)$ is the signature of the general group $K$.

\end{itemize}
\end{coro}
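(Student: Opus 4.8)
The plan is to deduce Corollary~\ref{clasifica2} from Theorem~\ref{clasifica1} specialised to $n=2$, the only extra work being an Euler-characteristic count that converts the structural data into the genus equation.

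First I would instantiate the list of basic groups of type $n$ at $n=2$. The types (T7) and (T8) require $n$ odd and so disappear; in (T2) the elliptic has order dividing $2$, hence (off the trivial case, which is the degenerate member of the (T4) family) order $2$; in (T3) the constraint ``$d\mid n$, $2d\nmid n$'' forces $d=2$, so the pseudo-elliptic has order $4$; in (T5) the pseudo-elliptic has order dividing $2n=4$ but not $n=2$, hence order $4$; and (T6) survives because $n=2$ is even. This reproduces the six families of Part~I, and the clause in Theorem~\ref{clasifica1}(1) that at least one of (T1), (T3), (T5), (T6), (T7) or (T8) is used becomes: at least one of (T1), (T3), (T5) or (T6) is used. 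I would then translate conditions (a) and (b) of Theorem~\ref{clasifica1}(2) to $n=2$. Condition (a) is precisely $a_{1}+a_{3}+a_{5}+a_{6}>0$, i.e.\ condition~1. For condition (b), the orders $r$ of the elliptic and pseudo-elliptic generators that can occur are $r=2$ (in (T2),(T4)) and $r=4$ (in (T3),(T5)), so the numbers $2n/r=4/r$ take only the values $2$ and $1$; their greatest common divisor equals $1$ exactly when a (T3) or a (T5) is used, and the hypothesis of condition (b), namely that no (T0), (T1) or (T6) occurs (there being no (T8) for $n=2$), together with condition~1 already forces such a factor. Hence, for $n=2$, condition~1 is all that is needed beyond the genus equation.

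The substantive new point is that genus equation. Write $\Gamma=\ker(\Phi\colon K\to{\mathbb Z}_{2n}={\mathbb Z}_{4})$, a torsion-free Schottky group of rank $g$, normal of index $4$ in $K$; then for the rational (Wall--Serre) Euler characteristic $1-g=\chi(\Gamma)=4\,\chi(K)$. Since a general group of type $2$ is built using only Klein--Maskit combination~(1), it is an honest free product $K=K_{1}\ast\cdots\ast K_{k}$ of its $k=a_{1}+\cdots+a_{6}$ basic factors (the associated graph of groups being a tree with trivial edge groups), so $\chi(K)=\sum_{j}\chi(K_{j})-(k-1)$. I would then identify each basic factor up to isomorphism: a (T1)-group is ${\mathbb Z}$, a (T4)- or (T6)-group is ${\mathbb Z}\times{\mathbb Z}_{2}$, and a (T5)-group is ${\mathbb Z}\rtimes{\mathbb Z}_{4}$ with ${\mathbb Z}_{4}$ acting through inversion (as $B^{-1}AB=A^{-1}$), all with $\chi=0$, whereas a (T2)-group is ${\mathbb Z}_{2}$ with $\chi=\tfrac12$ and a (T3)-group is ${\mathbb Z}_{4}$ with $\chi=\tfrac14$. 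Substituting gives $g=1-4\chi(K)=4k-3-2a_{2}-a_{3}=4a_{1}+2a_{2}+3a_{3}+4a_{4}+4a_{5}+4a_{6}-3$, which is condition~2. Equivalently, one can run Riemann--Hurwitz for the degree-$4$ di-analytic covering $S\to S/H$, the cone-point contributions being exactly those of the elliptic and pseudo-elliptic generators of types (T2)--(T5).

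I expect this last step to be the main obstacle, not conceptually but in making the bookkeeping tight: one must verify that the combination is genuinely along a tree so no HNN (loop) correction term appears; correctly pin down the isomorphism type of each basic factor (in particular that $B^{-1}ABA=I$ in (T5) makes $\langle A\rangle$ normal with cyclic quotient of order $4$, and that the relation in (T6) yields ${\mathbb Z}\times{\mathbb Z}_{2}$); and check the degenerate low-rank instances — a single (T3) gives $g=0$, a single (T1), (T4), (T5) or (T6) gives $g=1$ — so that the formula holds with no lower bound on $g$ besides the one already encoded in condition~1. One should also track loxodromic cyclic groups (type (T0)): since $\chi({\mathbb Z})=0$ they carry the same weight $4$ as (T4) in the formula, so they are accounted for once regarded as the degenerate members of the (T4) family.
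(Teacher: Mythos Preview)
Your argument is correct. The paper itself does not give a proof of this corollary: it simply presents it as the $n=2$ instantiation of Theorem~\ref{clasifica1}, states the genus formula without derivation, and afterwards adds the remark that (T0) factors may be absorbed by replacing them with (T1) factors.

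Two small points where your writeup diverges from the paper. First, the paper disposes of (T0) by observing that, once some orientation-reversing factor is present, any loxodromic cyclic free factor can be re-expressed (via a change of free-product generators) as a glide-reflection cyclic factor, i.e.\ (T0) folds into (T1); you instead fold (T0) into (T4). For the genus count this is harmless, since all of (T0), (T1), (T4), (T5), (T6) have $\chi=0$ and contribute $4$, but calling (T0) a ``degenerate (T4)'' is awkward because the Corollary's (T4) explicitly requires an elliptic of order~$2$. Likewise, your parenthetical that the trivial (T2) is a ``degenerate (T4)'' is a slip: the trivial group is simply dropped from any free product. Second, your Euler-characteristic derivation of the genus formula is more than the paper provides, and is the cleanest route: the paper never writes down the computation $1-g=4\chi(K)=4\bigl(\sum_j\chi(K_j)-(k-1)\bigr)$, so your bookkeeping here is genuinely additive. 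The identification of the (T5) and (T6) factors as virtually-$\mathbb{Z}$ groups with $\chi=0$ is correct; just note that the relation ``$BA=AB=I$'' in the paper's (T6) is a typographical oddity and should be read as $BA=AB$.
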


\begin{rema}
Note that in the above result we have left aside the basic groups of type (T0). The reason is that, 
as $a_{1}+a_{3}+a_{5}+a_{6}>0$, if $a_{0}>0$, then we may replace the $a_{0}$ groups of type (T0) by groups of type (T1). 
\end{rema}

The number of different signatures of ${\mathbb Z}_{4}$-Schottky groups of the same rank $g$ produce topologically non-equivalent extended ${\mathbb Z}_{4}$-Schottky groups. In this way, the number of different topological extended ${\mathbb Z}_{4}$-Schottky groups of rank $g$ is the cardinality of the set
{\small
$$X_{g}=\left\{(a_{1},\ldots,a_{6}): a_{j} \in {\mathbb N}_{0}, \; a_{1}+a_{3}+a_{5}+a_{6}>0, \; g+3=4a_{1}+2a_{2}+3a_{3}+4a_{4}+4a_{5}+4a_{6} \right\},$$
}
where ${\mathbb N}_{0}$ denotes the set of  non-negative integers.

If we set 
$$N_{g}=\left\{(\alpha,\beta,\gamma,\delta): \alpha, \beta, \gamma, \delta \in {\mathbb N}_{0}, \; g+3=4\alpha+2\beta+3\gamma+4\delta, \; \alpha+\gamma>0 \right\},$$
and
$$Y(\alpha)=\left\{(x,y,z): x,y,z \in {\mathbb N}_{0}, \; x+y+z=\alpha\right\}.
$$
then  (by setting $\alpha=a_{1}+a_{5}+a_{6}$, $\beta=a_{2}$, $\gamma=a_{5}$ and $\delta=a_{4}$),
$$\#X_{g}=\sum_{(\alpha,\beta,\gamma,\delta) \in N_{g}} \#Y(\alpha)=\frac{1}{2}\sum_{(\alpha,\beta,\gamma,\delta) \in N_{g}} (1+\alpha)(2+\alpha).$$

If we set 
$$N_{g}(\alpha)=\left\{(\beta,\gamma,\delta): \beta, \gamma, \delta \in {\mathbb N}_{0}, \; g+3=4\alpha+2\beta+3\gamma+4\delta, \; \alpha+\gamma>0  \right\},$$
then (since $\alpha \in \left\{0,1,\ldots, \left[\frac{g+3}{4}\right]\right\}$)
$$\#X_{g}=\frac{1}{2}\sum_{\alpha=0}^{\left[\frac{g+3}{4}\right]} (2+\alpha)(1+\alpha) \#N_{g}(\alpha).$$

In particular, for $g=1$, we have 
$\#N_{1}(0)=0$ and $\#N_{1}(1)=1$; so $\#X_{1}=3$.

\begin{lemm}
If $0 \leq \alpha \leq \left[\frac{g+3}{4}\right]$, $l_{0}=1$ and, for $\alpha>0$, $l_{\alpha}=0$, then

$$ \#N_{g}(\alpha)=\left\{ \begin{array}{ll}
\displaystyle{
\sum_{l=1}^{\frac{\left[\frac{g+3-4\alpha}{3}\right]+1}{2}} \left(1+\left[\frac{g+6-4\alpha-6l)}{4}\right] \right)}, & \mbox{if $g \geq 2$ even and $\left[\frac{g+3-4\alpha}{3}\right]$ odd.}\\
\\
\displaystyle{
\sum_{l=1}^{\frac{\left[\frac{g+3-4\alpha}{3}\right]}{2}} \left(1+\left[\frac{g+6-4\alpha-6l)}{4}\right] \right)}, & \mbox{if $g \geq 2$ even and $\left[\frac{g+3-4\alpha}{3}\right]$ even.}\\
\\
\displaystyle{
\sum_{l=l_{\alpha}}^{\frac{\left[\frac{g+3-4\alpha}{3}\right]}{2}} \left(1+\left[\frac{g+3-4\alpha-6l)}{4}\right] \right)}, & \mbox{if $g \geq 3$ odd and $\left[\frac{g+3-4\alpha}{3}\right]$ even.}\\
\\
\displaystyle{
\sum_{l=l_{\alpha}}^{\frac{\left[\frac{g+3-4\alpha}{3}\right]-1}{2}} \left(1+\left[\frac{g+3-4\alpha-6l)}{4}\right] \right)}, & \mbox{if $g \geq 3$ odd and $\left[\frac{g+3-4\alpha}{3}\right]$ odd.}
\end{array}
\right.
$$
\end{lemm}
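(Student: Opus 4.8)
The plan is to count $N_{g}(\alpha)$ directly as a set of lattice points, slicing according to the value of $\gamma$. Put $m:=g+3-4\alpha$, so that $N_{g}(\alpha)$ consists of the triples $(\beta,\gamma,\delta)\in{\mathbb N}_{0}^{3}$ with $2\beta+3\gamma+4\delta=m$, subject to $\gamma\geq 1$ if $\alpha=0$ and $\gamma\geq 0$ otherwise. Fixing an admissible $\gamma$, the equation reduces to $2\beta+4\delta=m-3\gamma$, which is solvable with $\beta,\delta\in{\mathbb N}_{0}$ exactly when $m-3\gamma\geq 0$ and $m-3\gamma$ is even; in that case the solutions are parametrized by $\delta\in\{0,1,\dots,[(m-3\gamma)/4]\}$, so there are exactly $1+[(m-3\gamma)/4]$ of them. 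Since $3\gamma\equiv\gamma\pmod 2$, the evenness condition is $\gamma\equiv m\pmod 2$, and therefore
$$\#N_{g}(\alpha)=\sum_{\gamma}\Bigl(1+\Bigl[\tfrac{m-3\gamma}{4}\Bigr]\Bigr),$$
where $\gamma$ runs over all nonnegative integers with $\gamma\equiv m\pmod 2$, $3\gamma\leq m$, and $\gamma\geq 1$ when $\alpha=0$.

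Next I would make the index set explicit. Because $m=g+3-4\alpha\equiv g+1\pmod 2$, the parity condition forces $\gamma$ odd when $g$ is even and $\gamma$ even when $g$ is odd. In the even case substitute $\gamma=2l-1$ with $l\geq 1$ (note $\gamma\geq 1$ is then automatic, so the requirement $\alpha+\gamma>0$ imposes nothing); in the odd case substitute $\gamma=2l$, where $l\geq 1$ if $\alpha=0$ and $l\geq 0$ otherwise --- this is exactly what the notation $l\geq l_{\alpha}$ with $l_{0}=1$ and $l_{\alpha}=0$ for $\alpha>0$ records. A short computation then rewrites the summand $1+[(m-3\gamma)/4]$ as $1+[(g+6-4\alpha-6l)/4]$ in the even-$g$ case and as $1+[(g+3-4\alpha-6l)/4]$ in the odd-$g$ case, matching the summands in all four displayed lines.

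It then remains to determine the largest admissible $l$. Writing $Q:=[m/3]=\bigl[\tfrac{g+3-4\alpha}{3}\bigr]$, the inequality $3\gamma\leq m$ is equivalent to $\gamma\leq Q$. For $g$ even, $\gamma=2l-1\leq Q$ reads $l\leq (Q+1)/2$, whose integer part is $(Q+1)/2$ if $Q$ is odd and $Q/2$ if $Q$ is even. For $g$ odd, $\gamma=2l\leq Q$ reads $l\leq Q/2$, with integer part $Q/2$ if $Q$ is even and $(Q-1)/2$ if $Q$ is odd. Substituting these four upper limits into the displayed sum yields precisely the four cases of the statement.

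The only delicate point is the bookkeeping: two independent parity dichotomies are in play --- the parity of $g$, which selects the substitution $\gamma=2l-1$ versus $\gamma=2l$, and the parity of $Q$, which selects the upper summation limit --- and on top of this the boundary constraint $\alpha+\gamma>0$ must be tracked (it is vacuous for $g$ even and becomes the lower bound $l\geq 1$ for $g$ odd and $\alpha=0$). One must also verify the floor identities such as $[(m-3(2l-1))/4]=[(g+6-4\alpha-6l)/4]$ without an off-by-one slip. Aside from this finite case analysis, the argument is a routine reindexing of a one-dimensional sum, and no deeper input (for instance, generating functions) is needed.
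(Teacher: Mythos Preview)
Your argument is correct and follows essentially the same route as the paper's own proof: slice $N_{g}(\alpha)$ by the value of $\gamma$, observe that for each admissible $\gamma$ there are $1+\bigl[(g+3-4\alpha-3\gamma)/4\bigr]$ choices of $(\beta,\delta)$, use the parity of $g$ to force $\gamma$ odd or even (hence the substitution $\gamma=2l-1$ or $\gamma=2l$), and read off the upper summation limit from the parity of $\bigl[(g+3-4\alpha)/3\bigr]$. Your introduction of the shorthand $m$ and $Q$ and your explicit tracking of the constraint $\alpha+\gamma>0$ make the bookkeeping a bit cleaner, but there is no substantive difference.
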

\begin{proof}
Note that, for  $\alpha=0$ we need $\gamma \geq 1$ and, for $\alpha>0$, we have $\gamma \geq 0$.
As $g+3-4\alpha=2\beta+3\gamma+4\delta$, then
$$\gamma \in \left\{l_{\alpha},\ldots, \left[\frac{g+3-4\alpha}{3}\right] \right\}$$
$$g+3-3\gamma \; \mbox{must be even}.$$

For each such $\gamma$, we have a freedom in the choice of
$$\delta \in \left\{0,\ldots, \left[\frac{g+3-4\alpha-3\gamma}{4}\right] \right\}$$
and, for fixed $\delta$, the value of $\beta$ is determined as
$$\beta=\frac{g+3-4\alpha-3\gamma-4\delta}{2}.$$

If $g \geq 2$ is even, then $\gamma \geq 1$ is odd, so
$\gamma=2l-1$, where 
$$1 \leq l \leq \left\{\begin{array}{ll}
\left(\left[\frac{g+3-4\alpha}{3}\right]+1\right)/2, & \mbox{ if $\left[\frac{g+3-4\alpha}{3}\right]$ is odd}\\
\\
\left[\frac{g+3-4\alpha}{3}\right]/2, & \mbox{ if $\left[\frac{g+3-4\alpha}{3}\right]$ is even}
\end{array}
\right.
$$

If $g \geq 3$ is odd, then $\gamma \geq l_{\alpha}$ is even, so
$\gamma=2l$, where 
$$l_{\alpha} \leq l \leq \left\{\begin{array}{ll}
\left(\left[\frac{g+3-4\alpha}{3}\right]-1\right)/2, & \mbox{ if $\left[\frac{g+3-4\alpha}{3}\right]$ is odd}\\
\\
\left[\frac{g+3-4\alpha}{3}\right]/2, & \mbox{ if $\left[\frac{g+3-4\alpha}{3}\right]$ is even}
\end{array}
\right.
$$

\end{proof}

\subsection{Example 1: Extended ${\mathbb Z}_{4}$-Schotky groups of rank $1$}
If we take $g=1$, then $\#X_{1}=3$. In this case, the tree possibilities are given by 
$$(a_{1},a_{2},a_{3},a_{4},a_{5},a_{6}) \in \{(1,0,0,0,0,0), (0,0,0,0,1,0), (0,0,0,0,0,1)\}.$$

The tuple $(1,0,0,0,0,0)$ produces an ${\mathbb Z}_{4}$-extended Schottky group $K_{1}$ which, 
up to conjugation by a suitable M\"obius transformation, is generated by $A(z)=\lambda \overline{z}$ where $\lambda >1$. In this case, there is exactly one Schottky group $\Gamma_{1}$ in $K_{1}$ with $K_{1}/\Gamma_{1} \cong {\mathbb Z}_{4}$; which is generated by $A^{4}(z)=\lambda^{4}z$. The group $K_{1}$ induces on the handlebody $M_{1}={\mathbb H}^{3}/\Gamma_{1}$ an orientation-reversing isometry of order $4$ acting freely. 

The tuple $(0,0,0,0,0,1)$ produces a extended  ${\mathbb Z}_{4}$-Schottky group $K_{2}$ which, 
up to conjugation by a suitable M\"obius transformation, is generated by $A(z)=\lambda \overline{z}$ and $B(z)=-z$, where $\lambda >1$. In this case, there is exactly one Schottky group $\Gamma_{2}$ in $K_{2}$ with $K_{2}/\Gamma_{2} \cong {\mathbb Z}_{4}$; which is generated by $BA^{2}(z)=-\lambda^{2}z$. The group $K_{2}$ induces on the handlebody $M_{2}={\mathbb H}^{3}/\Gamma_{2}$ an orientation-reversing isometry of order $4$ whose locus of fixed points is a simple closed geodesic. 

The tuple $(0,0,0,0,1,0)$ produces a extended  ${\mathbb Z}_{4}$-Schottky group $K_{3}$ which, 
up to conjugation by a suitable M\"obius transformation, is generated by $A(z)=\lambda z$ and $B(z)=i/\overline{z}$, where $\lambda >1$. In this case, there are two Schottky groups $\Gamma_{3}$ in $K_{3}$ with $K_{3}/\Gamma_{3} \cong {\mathbb Z}_{4}$ (these are, repectively, generated by $A$ and by $B^{2}A$). The group $K_{3}$ induces on the handlebody $M_{3}={\mathbb H}^{3}/\Gamma_{3}$ an orientation-reversing isometry of order $4$ with only an isolated fixed points in the interior, but whose square has a simple closed geodesic as locus of fixed points. 


\subsection{Example 2: Extended ${\mathbb Z}_{4}$-Schotky groups of rank $2$}
 If $g=2$, then $\#X_{2}=1$. The only tuple is $(a_{1},a_{2},a_{3},a_{4},a_{5},a_{6})=(0,1,1,0,0,0)$. The produced  extended ${\mathbb Z}_{4}$-Schottky group $K$ is, up to conjugation by a suitable M\"obius transformation, generated by $A(z)=-1/\overline{z}$ and an elliptic transformation of order $2$, say $B$.
In this case, there is exactly one Schottky group $\Gamma$ in $K$ with $K/\Gamma \cong {\mathbb Z}_{4}$; which is generated by $A^{2}B$ and $A^{-1}BA^{-1}$. The group $K$ induces on the handlebody $M={\mathbb H}^{3}/\Gamma$ an orientation-reversing isometry of order $4$ on which it has exactly one simple closed geodesic as locus of fixed points and whose square is the hyperelliptic involution (then its locus of fixed points consists of exactly $3$ pairwise disjoint simple geodesic arcs. The closed Riemann surface $S$ uniformized by $\Gamma$ (that is, the conformal boundary of $M$) corresponds to an algebraic curve of the form
$$y^{2}=x(x^{2}-1)(x-b)(x+\overline{b}^{-1}),$$
where $b^{4} \neq 0,1$. The orientation-reversing automorphism of order $4$ is given by
$$\tau: \left\{ \begin{array}{ccc}
x & \mapsto & \dfrac{-1}{\overline x}\\
&&\\
y & \mapsto & \left( \dfrac{-b}{\overline b} \right)^{1/2}\dfrac{\overline{y}}{\overline{z}^{3}}
\end{array}
\right.
$$

\section{Connection with handlebodies}\label{handles}
Let $M$ be a handlebody of genus $g \geq 2$. The space that parametrizes marked Schottky structures on $M$ is the markd Schottky space and the moduli space of them (so we forget the marking) is the Schottky space.

If $H_{1}$ and $H_{2}$ are (finite) groups of homeomorphisms of $M$, then they are (weakly) {\it topologically equivalent} if there is an orientation-preserving self-homeomorphism $h:M \to M$ such that $H_{2}= f H_{1} f^{-1}$. In \cite{Bartos},  Bartoszy\'nska provided the topological classification  for $g=2$. In \cite{Ka-Mi}, Kalliongis-Miller characterized orientation-preserving finite group actions for $g \geq 2$ and in \cite{Ka-Mc} Kalliongis-McCullough considered a topological picture of orientation reversing involutions. In all of these papers, the used method is combinatorial and $3$-dimensional in nature. In \cite{Ka-Mi} an explicit formulae to obtain the number of equivalence classes for cyclic groups of prime order $p$ was provided. In \cite{Costa-Mc} it was studied the case of free fixed point orientation-reversing group actions on handlebodies and a classification theorem was obtained in terms of algebraic invariants that involve Nielsen equivalence.

If $\Gamma$ is a Schottky group of rank $g$, then $M_{\Gamma}=({\mathbb H}^{3} \cup \Omega)/\Gamma$, where $\Omega$ is the region of discontinuity of $\Gamma$ is homeomorphic to $M$. We say that $\Gamma$ induces a {\it Schottky structure} on $M$ (it induces a complete hyperbolic structure on the interior of $M$, whose injectivity radius is bounded away from zero, and also it provides a Riemann surface structure on the topological boundary of $M$). A {\it conformal automorphism} (respectively, {\it anticonformal automorphism}) of $M_{\Gamma}$ is an orientation-preserving (respectively, orientation-reversing) homeomorphism $f:M_{\Gamma} \to M_{\Gamma}$ whose restriction to the interior hyperbolic $3$-manifold $M^{0}_{\Gamma}={\mathbb H}^{3}/\Gamma$ is an isometry.

The following well known fact permits to see the relation between finite groups of homeomorphisms of handlebodies and Schottky groups.

\begin{lemm}\label{notita}
Let $H$ be a finite group of homeomorphism of the handlebody $M$. Then there is a Schottky structure on $M$ and there is a group $\widehat{H} \cong H$ of conformal/anticonformal automorphisms of $M$ (with respect to the Schottky structure) which is homotopic to $H$.
\end{lemm}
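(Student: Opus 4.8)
The plan is to transfer the problem to the boundary surface, realise $H$ conformally there, invoke the lifting criterion of Theorem~\ref{teo1}, and then read off the Schottky structure on $M$ from the resulting Kleinian group; since $g\geq 2$ throughout this section, Theorem~\ref{teo1} is available. \emph{Step 1 (an $H$-invariant meridian system).} First I would restrict $H$ to a finite group of self-homeomorphisms of the closed orientable genus $g$ surface $S_{0}=\partial M$ and, using the equivariant loop theorem \cite{Y-M}, produce a complete system of meridian disks of $M$ permuted by $H$: starting from any complete meridian system, one makes it $H$-equivariant by innermost-disk surgeries, the process terminating because $H$ is finite. Its boundary curves form a finite $H$-invariant family $\mathcal{F}_{0}$ of pairwise disjoint simple loops on $S_{0}$ such that each connected component of $S_{0}\setminus\mathcal{F}_{0}$ is planar (cutting $M$ along the disks yields balls).

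\emph{Step 2 (geometry on $S$ and the lift).} Next I would invoke Nielsen realisation (Kerckhoff, together with its extension to the extended mapping class group to cover the anticonformal part of $H$) to obtain a complex structure on $S_{0}$ and a self-homeomorphism $\phi$ such that $\widehat{H}:=\phi H\phi^{-1}$ is a group, isomorphic to $H$, of conformal and anticonformal automorphisms of the resulting closed Riemann surface $S$; equivalently, a hyperbolic metric on $S$ on which $\widehat{H}$ acts by isometries. Setting $\mathcal{F}=\phi(\mathcal{F}_{0})$ and replacing each of its loops by the geodesic in its free homotopy class, I obtain a geodesic multicurve that is genuinely $\widehat{H}$-invariant, still has planar complement, and still has each component a meridian of the associated handlebody. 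Now choose the Schottky uniformisation $(\Omega,\Gamma,P)$ of $S$ adapted to $\mathcal{F}$, that is, one for which $P^{-1}(\alpha)$ consists of loops for every $\alpha\in\mathcal{F}$ (such a uniformisation exists precisely because the curves of $\mathcal{F}$ form a complete meridian system): then $\mathcal{F}$ satisfies conditions (i), (ii) and (iii) of Theorem~\ref{teo1}, so $\widehat{H}$ lifts with respect to $(\Omega,\Gamma,P)$. Letting $K$ be generated by $\Gamma$ and all these lifts, $K$ is a (possibly extended) Kleinian group with $\Gamma\lhd K$ and $K/\Gamma\cong\widehat{H}\cong H$, and $M_{\Gamma}=(\mathbb{H}^{3}\cup\Omega)/\Gamma$ is a handlebody of genus $g$ carrying a Schottky structure on which $K/\Gamma$ acts by conformal/anticonformal automorphisms (isometries on the hyperbolic interior, conformal/anticonformal on $\Omega/\Gamma\cong S$).

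\emph{Step 3 (matching $M_{\Gamma}$ with $M$).} Finally, $\phi$ carries the complete meridian system $\mathcal{F}_{0}$ of $M$ onto a complete meridian system of $M_{\Gamma}$ (each curve of $\mathcal{F}$ is null-homotopic in $M_{\Gamma}$, hence bounds an embedded disk by Dehn's lemma, and these disks cut $M_{\Gamma}$ into balls), so $\phi$ extends to a homeomorphism $\Phi\colon M\to M_{\Gamma}$. Pulling the Schottky structure and $\widehat{H}$ back by $\Phi$ gives the asserted Schottky structure on $M$ together with a group $\Phi^{-1}\widehat{H}\Phi\cong H$ of conformal/anticonformal automorphisms; it is homotopic to $H$ because the two groups restrict to homotopic self-homeomorphisms of $\partial M$, the inclusion $\pi_{1}(\partial M)\to\pi_{1}(M)$ is onto, and $M$ is aspherical (its universal cover deformation retracts onto $\mathbb{H}^{3}$), so self-maps of $M$ inducing conjugate automorphisms of $\pi_{1}(M)$ are homotopic.

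The hard part will be the coordination in Step 2: obtaining a \emph{single} complex structure on $S$ that at once makes the prescribed topological $H$-action conformal/anticonformal and carries the $H$-invariant meridian loop system needed to feed Theorem~\ref{teo1}. This is exactly the point where Nielsen realisation and the equivariant loop theorem must be made to cooperate (reconciled here by passing to geodesic representatives), and one must track basepoints and the induced $\pi_{1}$-actions with enough care to be sure that $\widehat{H}$ sits in the homotopy class of the original $H$ rather than being an unrelated isomorphic copy.
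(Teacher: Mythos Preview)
Your proposal is correct and follows the same skeleton as the paper's proof: restrict $H$ to the boundary, apply Nielsen realisation to obtain a Riemann surface structure on which $H$ acts by (anti)conformal automorphisms, produce a Schottky uniformisation for which this action lifts, and then use asphericality of the handlebody to upgrade the boundary homotopy to a homotopy on $M$. The difference is one of explicitness rather than strategy: the paper compresses your Steps~1 and~2 into the single sentence ``the handlebody $M$ provides a Schottky uniformisation $(\Omega,\Gamma,P)$ with the property that $H_{X}$ lifts'', treating the existence of an $H$-equivariant meridian system and the consequent applicability of Theorem~\ref{teo1} as understood, whereas you unpack this by explicitly invoking the equivariant loop theorem and the geodesic-representative trick to manufacture the loop system $\mathcal{F}$ required by Theorem~\ref{teo1}. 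Your version is more self-contained; the paper's is terser but relies on the reader supplying exactly the argument you wrote out.
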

\begin{proof}
Let $M^{0}$ be the interior of $M$ and let $X$ its boundary (a closed orientable surface of genus $g$).
The group $H$ acts on $X$ by restriction as a group $H_{X} \cong H$ of homeomorphisms. As a consequence of Nielsen's realization theorem \cite{Kerckhoff},  $X$ has a Riemann surface structure $S$ such that (up to homotopy equivalence) the group $H_{X}$ acts as a group of automorphisms on $S$. The handlebody $M$ provides a Schottky unifromization $(\Omega,\Gamma,P:\Omega \to S)$  with the property that $H_{X}<{\rm Aut}(S)$ lifts to a group $K$ of automorphisms of $\Omega$ and $M_{\Gamma}= M$.
 We observe that  $K$ is a subgroup of $\widehat{\mathbb M}$ with $\Gamma \lhd K$ and $H_{X} \cong K/\Gamma$. The group $K$ induces a group $\widehat{H}$ of conformal/anticonformal automorphisms of $M_{\Gamma}$, isomorphic to $H$, which is homotopic to $H$ on the boundary $S$. As $M$ is a compression body, $\widehat{H}$ and $H$ are homotopic on $M$.
\end{proof}

\subsection{The cyclic case}
Let us assume that $\tau:M \to M$ is a finite order homeomorphism and set $H=\langle \tau \rangle$. Let the order of $\tau$ be $n$ (respectively, $2n$) if $\tau$ is orientation-preserving (respectively, orientation-reversing). By Lemma \ref{notita}, there is a Schottky structure on $M$ for which we may though of $\tau$ (up to homotopy) as a conformal/anticonformal automorphism. Let us denote by $\Gamma$ the Schottky group which provides such Schottky structure on $M$. The results in \cite{GH} and Theorem \ref{clasifica1} complement the work done by Kalliongis-Miller in \cite{Ka-Mi} as follows.  By lifting $\tau$ to the universal cover space ${\mathbb H}^{3} \cup \Omega$, we obtain a group $K$ which is a
${\mathbb Z}_{n}$-Schottky group of rank $g$ if $\tau$ is orientation-preserving (respectively, an extended ${\mathbb Z}_{n}$-Schottky group of rank $g$ if $\tau$ is orientation-reversing)  with $\Gamma \lhd K$ and $K/\Gamma =\langle \tau \rangle$.

\subsubsection{Case $\tau$ is conformal}
In this case, as $K$ is a ${\mathbb Z}_{n}$-Schottky group of rank $g$, the structural decomposition provided in \cite{GH} asserts that $K$ can be constructed using $``a"$ cyclic groups, each one generated by a loxodromic transformation, $``b"$ cyclic groups generated by elliptic transformations and $``m"$ Abelian groups. Assume the $b$ elliptic cyclic groups have orders $n_{1}$,..., $n_{b}$, and that the $m$ Abelian groups are isomorphic to ${\mathbb Z}_{l_1} \oplus {\mathbb Z}$,..., ${\mathbb Z}_{l_m} \oplus {\mathbb Z}$. With this information we are able to describe the locus of fixed points as $\tau$ and the quotient orbifold $M/H$ as follows.
\begin{enumerate}
\item The locus of fixed points of the non-trivial powers of $\tau$ is given by a pairwise collection of $\sum_{j=1}^{m}n/l_{j}$ simple loops (closed geodesics of $M^{0}_{\Gamma}$) and $\sum_{j=1}^{b} n/n_j$ simple arcs connecting two different points on $S$ (the interiors of these arcs being simple geodesics of $M^{0}_{\Gamma}$). 

\item The quotient orbifold $M/H$ is a (topological) handlebody of genus $b+m$ whose conical locus consists of exactly $``b"$ simple arcs and $``m"$ simple loops; all of them disjoint.

\end{enumerate}

If $n=2$, then in \cite{Ka-Mi} it was noted that the number of topologically non-equivalent actions is $(g+2)(g+4)/8$ when $g$ is even and $(g+3)(g+5)/8$ when $g$ is odd. This number is the same as the number of topological different conjugacy classes of  ${\mathbb Z}_{2}$-Schottky groups of a fixed rank $g$. This is consequence of the fact that given any index two Schottky subgroups, say $\Gamma_{1}$ and $\Gamma_{2}$, of the same ${\mathbb Z}_{2}$-Schottky group $K$, then one may construct an orientation-preserving homeomorphism (even a quasi-conformal one) $f:\widehat{\mathbb C} \to \widehat{\mathbb C}$ so that $fKf^{-1}=K$ and $f\Gamma_{1}f^{-1}=\Gamma_{2}$.

\subsubsection{Case $\tau$ is anticonformal}
In this case $K$ is a extended ${\mathbb Z}_{2n}$-Schottky group of rank $g$, so Theorem \ref{clasifica1} asserts that $K$ can be constructed using groups of types (T0)--T(8). 
The basic groups of types (T0) and (T1) do not produce fixed points. 

The description of the connected components of fixed points of $H$ is 
as follows.

\begin{enumerate}
\item A group of type (T2), say generated by an elliptic transformation of order $d \geq 2$ a divisor of $n$, produces a simple arc as a conical component of $M/H$  (with the exception of its ends points, its lies in the interior of $M/H$). That arc has conical order equal to $d$. By lifting to $M$ such a conical arc, we obtain a collection of $2n/d$
simple arcs in $M$ (with the exception of its ends points, they lie in the interior). Each of these arcs is a connected component of the locus of fixed points of $\tau^{2n/d}$.

\item A group of type (T3), say generated by a pseudo-elliptic transformation of order $2$ (that is, an imaginary reflection) produces an isolated conical point in the interior of $M/H$ of conical order $2$. It liftings to $M$ provides a collection of $n$ points, being fixed points of $\tau^{n}$ (in this case, $n$ is necessarily odd and $\tau^{n}$ is an imaginary-reflection).

\item A group of type (T3), say generated by a pseudo-elliptic transformation of order $2d$, with $d \geq 2$ a divisor of $n$, produces a simple arc as a conical component of $M/H$. One of the end points belong to the interior of $M/H$ and the other to the boundary. The arc (with the interior end point deleted) has conical order $d$. The deleted point has conical order $2d$. By lifting such an arc, we obtain a collection of $n/d$ simple arcs as components of fixed points of $\tau^{2n/d}$. The lifting of the interior end point is a collection of $n/d$ fixed points of $\tau^{n/d}$ (in particular, this case  only happens if $n/d$ is odd).

\item A group of type (T4), say with the elliptic generator being of order $d \geq 2$ a divisor of $n$,  produces a simple loop as a conical component of $M/H$. Its conical order is $d$. By lifting such a loop, we obtain a collection of $2n/d$ simple loops as components of fixed points of $\tau^{2n/d}$.

\item A group of type (T5), say with the pseudo-elliptic generator of order $2$, produces two isolated conical point in the interior of $M/H$ of conical order $2$. Its liftings to $M$ provides a collection of $2n$ points, being fixed points of $\tau^{n}$ (in this case, $n$ is necessarily odd).

\item A group of type (T5), say that the pseudo-elliptic generator has order $2d$, with $d \geq 2$ a divisor of $n$, produces a simple arc as a conical component of $M/H$ of conical order $d$ (both of its end points has conical order $2d$). By lifting such an arc to $M$ we obtain a collection of $2n/d$ simple loops as components of fixed points of $\tau^{2n/d}$ and the lifted points (of the conical point of order $2d$) are isolated fixed points of $\tau^{n/d}$ (in which case, $n/d$ should be odd).

\item A group of type (T6) (then $n$ is even) produces a simple loop as a component of conical points of $M/H$, with conical order $2$. By lifting such a loop we obtain a collection of $n$ simple loops in $M$ as components of fixed points of the conformal involution $\tau^{n}$. 

\item A group of type (T7) (then $n$ is odd) produces a closed disc as a conical components of $M/H$ (the boundary of the disc belongs to the boundary of $M/H$ and the interior of the disc to the interior of $M/H$) of conical order $2$. By lifting such a disc, we obtain a collection of $n$ discs as components of fixed points of the anticonformal involution $\tau^{n}$.

\item A group of type (T8) (then $n$ is odd) produces a bordered compact orbifold as a component of conical points of $M/H$. By lifting it, we obtain a collection of bordered compact surfaces as components of fixed points of the anticonformal involution $\tau^{n}$.
\end{enumerate}

It follows that, in the case that $n$ is odd, $\tau$ has no dimension two real locus of fixed points if and only if  there are  no groups of types (T7) nor (T8) in the construction of $K$. In this case, the anticonformal involution $\tau^{n}$ only has isolated fixed points at most. We discuss this case in the example below.

\subsubsection{Example}
Let us assume $n$ is odd and there are no groups of type (T7) nor (T8) in the construction of $K$. As $n$ is odd, then neither we cannot use groups of type (T6). Assume that in the construction of $K$ we use
\begin{enumerate}
\item $a_{0}$ groups of type (T0);
\item $a_{1}$ groups of type (T1);
\item $a_{2}$ groups of type (T2), of orders $2 \leq l_{1} \leq l_{2} \leq \cdots \leq l_{a_{2}} \leq n$, where each $l_{j}$ is a divisor of $n$;
\item $a_{3}$ groups of type (T3), or orders $2 \leq 2r_{1} \leq 2r_{2} \leq \cdots \leq 2r_{a_{3}} \leq 2n$, where each $2r_{j}$ is a divisor of $2n$ but not a divisor of $n$;
\item $a_{4}$ groups of type (T4); and
\item $a_{5}$ groups of type (T5).
\end{enumerate}

It follows that ${\mathcal O}=\Omega/K$ is a closed Klein orbifold of signature
$$(2(a_{0}+a_{1})+a_{3}+2a_{4}+2a_{5};-;l_{1},l_{1},l_{2},l_{2},...,l_{a_{2}},l_{a_{2}},r_{1},r_{2},...,r_{a_{3}}),$$
that is, ${\mathcal O}$ is the connected sum of $2(a_{1}+a_{0})+a_{3}+2a_{4}+2a_{5}$ projective planes and $2a_{2}+a_{3}$ conical points (whose orders are given in the third part of the signature). 
In this way, ${\mathcal O}^{+}=\Omega/K^{+}$ is a Riemann orbifold of signature
$$(2(a_{0}+a_{1})+a_{3}+2a_{4}+2a_{5}-1;+;l_{1},l_{1},l_{1},l_{1},...,l_{a_{2}},l_{a_{2}},l_{a_{2}},l_{a_{2}},r_{1},r_{1},,...,r_{a_{3}},r_{a_{3}}),$$
that is, ${\mathcal O}^{+}$ is a closed Riemann surface of genus $2(a_{0}+a_{1})+a_{3}+2a_{4}+2a_{5}-1$ and $4a_{2}+2a_{3}$ conical points. 

Let $\Gamma$ be a Schottky group so that $\Gamma \lhd K$ and $K/\Gamma \cong {\mathbb Z}_{2n}$.
The closed Riemann surface $S=\Omega/\Gamma$ admits an anticonformal automorphism $\tau|_{S}$, of order $2n$, induced by $\tau$. Set $f=\tau^{2}$. Then, $f:S \to S$ is a conformal automorphism of order $n$, with $S/\langle f \rangle = {\mathcal O}^{+}$. It follows from Riemann-Hurwitz formula that the genus of $S$, that is, the rank of $\Gamma$ is
$$g=n \left( 2(a_{0}+a_{1})+a_{3}+2a_{4}+2a_{5}-2 +2\sum_{j=1}^{a_{2}}(1-l_{j}^{-1})+\sum_{l=1}^{a_{3}}(1-r_{l}^{-1})     \right) +1. $$


\end{document}